\documentclass[a4paper]{amsart}
\usepackage{amsmath,amsfonts,amssymb,amsthm}
\usepackage{enumerate}
\newtheorem{theorem}{Theorem}[section]
\newtheorem{lemma}[theorem]{Lemma}
\newtheorem{proposition}[theorem]{Proposition}

\theoremstyle{remark}
\newtheorem{remark}[theorem]{Remark}
\theoremstyle{definition}

\title{Almost primes in almost all very short intervals}
\author{Kaisa Matom\"aki}
\address{Department of Mathematics and Statistics, University of Turku, 20014 Turku, Finland}
\email{ksmato@utu.fi}
\begin{document}

\begin{abstract}
We show that as soon as $h\to \infty$ with $X \to \infty$, almost all intervals $(x-h\log X, x]$ with $x \in (X/2, X]$ contain a product of at most two primes. In the proof we use Richert's weighted sieve, with the arithmetic information eventually coming from results of Deshouillers and Iwaniec on averages of Kloosterman sums.
\end{abstract}
\subjclass[2010]{11N25, 11N36}
\maketitle

\section{Introduction}

By probabilistic models, one expects that short intervals of the type $(x-h \log X, x]$ contain primes for almost all $x \in (X/2, X]$ as soon as $h \to \infty$ with $X \to \infty$. Heath-Brown~\cite{H-BRHPC} has established this assuming simultaneously the Riemann hypothesis and the pair correlation conjecture for the zeros of the Riemann zeta function. Without such strong hypotheses we are rather far from this claim --- the best result~\cite{Jia} today is that almost all intervals of length $X^{1/20}$ contain primes.

One can ask a similar question about almost-primes, i.e. $P_k$ numbers that have at most $k$ prime factors or $E_k$ numbers that have exactly $k$ prime factors. In the second case the best results are due to Ter\"av\"ainen~\cite{Tera} who showed that, for any $\varepsilon > 0$, almost all intervals of length $(\log \log X)^{6+\varepsilon} \log X$ contain an $E_3$ number and that almost all intervals of length $(\log X)^{3.51}$ contain an $E_2$-number.

The case of $P_k$ numbers is significantly easier than that of $E_k$ numbers since the so called parity barrier does not apply; due to the parity barrier classical sieve methods based only on so called type I information cannot distinguish numbers having an even number of prime factors from those having an odd number of prime factors. In particular classical sieve methods can, in favourable circumstances, be used to show that a given set contains $P_2$ numbers, but to show that it contains $E_2$ numbers requires additional arithmetic information. For more information about the parity barrier, see e.g.~\cite[Section 16.4]{Opera}.

Following Friedlander~\cite{Fried1, Fried2}, Friedlander and Iwaniec~\cite{Opera} showed that as soon as $h \to \infty$ with $X\to \infty$, almost all intervals $(x-h \log X, x]$ contain $P_{19}$-numbers.

In~\cite[Between Corollary 6.28 and Proposition 6.29]{Opera} Friedlander and Iwaniec discuss the possibility to improve their result. In particular they mention that using linear sieve theory and estimates for general bilinear forms of exponential sums with Kloosterman fractions from~\cite{DFI}, one should be able to improve $P_{19}$ to $P_3$. Then they say that "It would be interesting to get integers with at most two prime divisors". This is the aim of the current note.

Let us introduce a few notational conventions before stating our main theorem: The letter $p$ with or without subscripts always denotes a prime number. We write $\Omega(n)$ for the total number of prime factors of $n$ and $\omega(n)$ for the number of distinct prime factors of $n$. Furthermore we write $\mathbf{1}_P$ for the indicator function of a claim $P$. Further notational conventions, including our asymptotic notation, are described in Section~\ref{sse:Notation}. 
\begin{theorem}
\label{th:MT}
There exists a constant $c > 0$ such that the following holds. Let $X \geq 2$ and $2 \leq h \leq X^{1/100}$. Then
\[
\sum_{\substack{x - h \log X  < n \leq x \\ p \mid n \implies p > X^{1/8}}} \mathbf{1}_{\Omega(n) \leq 2} \geq c h
\]
for all $x \in (X/2, X]$ apart from an exceptional set of measure $O(X/h)$.
\end{theorem}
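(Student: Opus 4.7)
The plan is to apply a variance argument to sums weighted by Richert's sieve, with the arithmetic input provided by Deshouillers--Iwaniec bounds on Kloosterman sums. Set $H = h\log X$ and $z = X^{1/8}$. First I would construct sieve minorant and majorant weights $\rho^-, \rho^+$ with
\[
\rho^-(n) \leq \mathbf{1}_{\Omega(n)\leq 2,\ p\mid n\,\Rightarrow\, p>z}\leq \rho^+(n),
\]
each of the form $(\text{linear sieve of level } D)\times(\text{Richert penalty})$, where the Richert factor reads $1 - (\log(y/z))^{-1}\sum_{z<p<y,\,p\mid n}\log(y/p)$. A direct computation shows that the choice $y = X^{7/16}$ makes this factor nonpositive on every $n \leq X$ with $\Omega(n)\geq 3$ and all prime factors exceeding $z$, so $\rho^-$ correctly isolates $P_2$-numbers (this is what forces the threshold $1/8$ in the statement). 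The sieve level $D = X^\theta$ must be taken $>X^{1/2}$ for the classical sieve functions $F, f$ combined with the Richert reduction to give a positive density constant; this is exactly what pushes the argument past the Bombieri--Vinogradov range and into Kloosterman territory.

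Write $S^\pm(x) := \sum_{x-H<n\leq x}\rho^\pm(n)$. I would first check that
\[
\frac{2}{X}\int_{X/2}^X S^\pm(x)\,dx = \kappa^\pm h + o(h)
\]
with $0 < \kappa^-\leq \kappa^+<\infty$, a standard sifting computation requiring only the distribution of integers in arithmetic progressions on the scale $d\leq D$. The crux is then to bound the variance
\[
V^\pm := \int_{X/2}^X \bigl(S^\pm(x) - \kappa^\pm h X/2\bigr)^2\, dx,
\]
since by Chebyshev a bound $V^\pm \ll hX$ produces $S^\pm(x) = \kappa^\pm h + o(h)$ outside an exceptional set of measure $O(X/h)$, and the sandwich $S^-\leq T \leq S^+$ then gives the theorem.

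Using $\int_{X/2}^X \mathbf{1}_{x-H<n_1\leq x}\mathbf{1}_{x-H<n_2\leq x}\,dx = \max(0, H-|n_1-n_2|)$, the variance decomposes into a diagonal ($n_1=n_2$) contribution of size $O(H\cdot X/\log X) = O(hX)$, which is acceptable, plus shifted correlations $\sum_n \rho^\pm(n)\rho^\pm(n+m)$ for $0 < |m| < H$, minus the square of the mean. Expanding the divisor representation of $\rho^\pm$ turns the shifted correlations into bilinear sums
\[
\sum_{d_1,d_2\leq D}\lambda^\pm_{d_1}\lambda^\pm_{d_2}\sum_{0<|m|<H}(H-|m|)\,\#\{n\in(X/2,X]: d_1\mid n,\ d_2\mid n+m\}
\]
(plus an analogous piece from the Richert penalty). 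After subtracting the expected main term, Poisson summation in $n$ converts the remaining discrepancy into bilinear sums involving Kloosterman fractions in the moduli built from $d_1,d_2$. Preparing the $\lambda^\pm_d$ in well-factorable form, following Iwaniec's construction of the linear sieve, allows one to invoke the Deshouillers--Iwaniec bilinear estimates for averages of Kloosterman sums, which give power savings precisely in the range $X^{1/2} < D\leq X^\theta$ we require.

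The main obstacle is this last step: fitting the combinatorial form of the Richert-weighted variance to the hypotheses of the Deshouillers--Iwaniec bounds, while keeping the various parameters consistent. The interval length $h\log X$, the sifting threshold $X^{1/8}$, the Richert cut-off $y = X^{7/16}$, and the required sieve level $D > X^{1/2}$ are all tightly linked, and the $P_2$ conclusion sits only barely within reach of the available Kloosterman input. Friedlander and Iwaniec identify precisely this step as the obstruction that kept them at $P_3$ in the sketch cited in the introduction, and the theorem amounts to carrying it out.
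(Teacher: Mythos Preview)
Your high-level plan matches the paper's: Richert's weighted sieve, a second-moment argument for the sieve-weighted counts in $(x-H,x]$, and Deshouillers--Iwaniec Kloosterman input to push the sieve level past $X^{1/2}$. The paper uses the parameters $D=X^{5/9}$, $z=X^{5/36}$, $y=X^{1/2}$ rather than yours, but that is not the issue. Two concrete technical ingredients are missing from your outline, and both are where the actual work lies.

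First, the ``analogous piece from the Richert penalty'' is exactly where well-factorability alone fails. After opening the square, the shifted-correlation term attached to the penalty carries an explicit prime $p$ of size up to $y\asymp X^{1/2}$ as a factor of the modulus, and a prime of that size cannot be split. The paper handles this by attaching a smooth weight to $p$ and then applying Vaughan's identity to the von Mangoldt weight, which decomposes the penalty contribution into genuine type~I and type~II bilinear forms to which the Kloosterman estimates (Lemmas~\ref{le:TypeII} and~\ref{le:TypeI}) apply. Without this decomposition you cannot feed the penalty part into the Deshouillers--Iwaniec machinery, and this is precisely the step that separates $P_2$ from $P_3$.

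Second, your assertion that the diagonal is $O(HX/\log X)$, and the analogous control of the term the paper calls $S_1$, conceal a real difficulty: one needs
\[
\sum_{d} d\Bigl(\sum_{m\equiv 0\ (d)}\frac{\lambda_m}{m}\Bigr)^2 \ll \frac{1}{\log X}
\qquad\text{and}\qquad
\sum_{n\leq X}\Bigl(\sum_{d\mid n}\lambda_d\Bigr)^2 \ll \frac{X}{\log X}.
\]
For the pure linear sieve these are not standard; Friedlander--Iwaniec suggest they might be provable but do not do so. The paper circumvents this by sifting primes below $X^{\delta}$ with a $\beta$-sieve ($\beta=30$) and combining it with the linear sieve above $X^{\delta}$ via a vector-sieve inequality; the $\beta$-sieve weights are tame enough that both estimates follow from Lemma~\ref{le:OperaCorrected} together with the fundamental-lemma decay. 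Your plan, using only the linear sieve, leaves these estimates open.

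A minor factual point: Friedlander and Iwaniec proved $P_{19}$, remarked that $P_3$ should follow by the route you sketch, and identified $P_2$ as the open problem; they were not ``kept at $P_3$''.
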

Hence, as soon as $h \to \infty$ with $X \to \infty$, almost all intervals of length $h\log X$ contain $P_2$-numbers. Previously it was known, as a consequence of the work of Ter\"av\"ainen~\cite{Tera} on $E_2$ numbers, that almost all intervals of length $(\log X)^{3.51}$ contain $P_2$-numbers. Before Ter\"av\"ainen's work, the best result was due to Mikawa~\cite{Mikawa} who showed that as soon as $h \to \infty$ with $X$, almost all intervals $(x - h(\log X)^5, x]$ contain $P_2$-numbers. On the other hand, by work of Wu~\cite{Wu} it is known that the interval $(x-x^{101/232}, x]$ contains $P_2$ numbers for all sufficiently large $x$.

The corresponding upper bound
\[
\sum_{\substack{x - h \log X  < n \leq x \\ p \mid n \implies p > X^{1/8}}} \mathbf{1}_{\Omega(n) \leq 2} =O(h)
\]
for all $x \in (X/2, X]$ apart from an exceptional set of measure $O(X/h)$ follows immediately from~\cite[Corollary 6.28]{Opera}. 

We will give an outline of the proof of Theorem~\ref{th:MT} in Section~\ref{se:Outline}.

\subsection{Notation}
\label{sse:Notation}
We write $\Lambda(n)$ for the von Mangoldt function, so that
\[
\Lambda(n) = 
\begin{cases}
\log p &\text{if $n = p^k$ for some prime $p$ and positive integer $k$;} \\
0 &\text{otherwise;}
\end{cases}
\]
and $\mu(n)$ for the M\"obius function so that
\[
\mu(n) = 
\begin{cases}
(-1)^{\omega(n)} &\text{if $n$ is square-free;} \\
0 & \text{otherwise.}
\end{cases}
\] 
Furthermore we write $\varphi(n)$ for the Euler $\varphi$-function so that 
\[
\varphi(n) = \sum_{\substack{1 \leq k \leq n \\ (k, n) = 1}} 1.
\]

For $f \colon \mathbb{R} \to \mathbb{C}$ and $g\colon \mathbb{R} \to \mathbb{R}^+$, we write $f(x) = O(g(x))$ or $f(x) \ll g(x)$ if there exists a constant $C > 0$ such that $|f(x)| \leq C g(x)$ for every $x$. Similarly, when also $f$ takes positive real values, we write $f(x) \gg g(x)$ if there exist a constant $c$ such that $f(x) \geq c g(x)$ for every $x$. If there is a subscript (e.g. $O_k(g(x))$), then the implied constant is allowed to depend on the parameter(s) in the subscript.

We say that $g \colon \mathbb{R} \to \mathbb{R}$ is smooth if it has derivatives of all orders. We will constantly work with smooth compactly supported functions whose support and derivatives are bounded from above independently of our parameters tending to infinity (e.g. $X$), so that
\begin{equation}
\label{eq:gderivbound}
\frac{d^k}{dx^k} g(x) \ll_k 1 \quad \quad \text{for every $k \geq 0$,}
\end{equation}
where the implied constant depends only on $k$.

For $u \in \mathbb{C}$ we write $e(u) := e(2\pi i u)$ and, for any function 
\[
g \in \mathbb{L}^1(\mathbb{R}) := \left\{f \colon \mathbb{R} \to \mathbb{R} \colon \int_{-\infty}^\infty |f(x)| dx < \infty\right\},
\]
we denote by $\widehat{g}$ the Fourier transform
\[
\widehat{g}(\xi) = \int_{-\infty}^\infty g(x) e(-\xi x) dx.
\]
If $g$ is a smooth and compactly supported function satisfying~\eqref{eq:gderivbound}, then one obtains by repeated partial integration that
\begin{equation}
\label{eq:gFourbound}
\widehat{g}(\xi) \ll_k \frac{1}{1+|\xi|^k} \quad \text{for any $\xi \in \mathbb{R}$ and $k \geq 0$}.
\end{equation}

In summation conditions, we write $m \sim M$ for $m \in (M, 2M]$. Furthermore we write $A \asymp B$ when $A \ll B \ll A$. For $a \in \mathbb{Z}$ and $q \in \mathbb{N}$ we write $\overline{a}$ for the inverse of $a \pmod{q}$ (the modulus will be clear from the context, e.g. in $e(\frac{c\overline{u}}{v})$ the inverse is $\pmod{v}$).

\subsection{Outline of the proof}
\label{se:Outline}
In this section we provide a simplified outline of the proof of Theorem~\ref{th:MT}. 

We start by applying Richert's weighted sieve (see e.g.~\cite[Chapter 25]{Opera}) which is tailored to finding $P_k$ numbers. More precisely, writing $H = h\log X$, $z = X^{5/36}$ and $P(z) = \prod_{p < z} p$,  we show in Section~\ref{se:set-up} that, for almost all $x \in (X/2, X]$,
\begin{equation}
\label{eq:OutRich}
\begin{split}
&\sum_{\substack{x - H  < n \leq x \\ (n, P(z)) = 1}} \mathbf{1}_{\Omega(n) \leq 2} \\
&\geq \frac{1}{2}\sum_{\substack{x - H < n \leq x}} \mathbf{1}_{(n, P(z)) = 1} - \frac{1}{2} \sum_{\substack{z \leq p < 2X^{1/2}}} \left(1-\frac{\log p}{\log y}\right) \sum_{\substack{x - H < np \leq x}} \mathbf{1}_{(n, P(z)) = 1}.
\end{split}
\end{equation}
Now classical sieve methods such as the $\beta$-sieve (see e.g.~\cite[Chapter 11]{Opera}) are very suitable for finding lower and upper bounds for $\mathbf{1}_{(n, P(z)) = 1}$ --- we use a lower bound sieve in the first sum and an upper bound sieve in the second sum on the right hand side of~\eqref{eq:OutRich}. We have
\begin{equation}
\label{eq:OutSieves}
\sum_{u \mid n} \lambda_u^- \leq \mathbf{1}_{(n, P(z)) = 1} \leq \sum_{u \mid n} \lambda_{u, p}^+, 
\end{equation}
where $\lambda_u^-$ are lower bound and $\lambda_{u, p}^+$ are upper bound linear sieve weights of levels $U := X^{5/9}$ and $U/p$. In particular $\lambda_u^-$ are supported on $u \leq U$ and $\lambda_{u, p}^+$ are supported on $u \leq U/p$. For precise definitions of sieve weights we use, see Section~\ref{se:set-up}.

Combining~\eqref{eq:OutRich} and~\eqref{eq:OutSieves}, we see that for almost all $x \in (X/2, X]$, one has
\[
2\sum_{\substack{x - H  < n \leq x \\ (n, P(z)) = 1}} \mathbf{1}_{\Omega(n) \leq 2} \geq \sum_u \lambda_u^- \sum_{\substack{\substack{x - H < n \leq x \\ u \mid n}}} 1 - \sum_{\substack{z \leq p < 2X^{1/2}}} \left(1-\frac{\log p}{\log y}\right) \sum_u \lambda_{u, p}^+ \sum_{\substack{x - H < np \leq x \\ u \mid n}} 1.
\]
Writing, for $d \in \{u, up\}$, 
\[
E_d(x) = \sum_{\substack{\substack{x - H < n \leq x \\ d \mid n}}} 1 - \frac{H}{d},
\]
one obtains
\[
\begin{split}
2\sum_{\substack{x - H  < n \leq x \\ (n, P(z)) = 1}} \mathbf{1}_{\Omega(n) \leq 2}  &\geq H \sum_u \frac{\lambda_u^-}{u} - H \sum_{\substack{z \leq p < 2X^{1/2}}} \frac{1}{p} \left(1-\frac{\log p}{\log y}\right) \sum_u \frac{\lambda_{u, p}^+}{u} \\
&+\sum_u \lambda_u^- E_u(x) - \sum_{\substack{z \leq p < 2X^{1/2}}} \sum_u \lambda_{u, p}^+ \left(1-\frac{\log p}{\log y}\right) E_{pu}(x).
\end{split}
\]
On the right hand side the first line gives the main term, and a calculation using known properties of the linear sieve coefficients shows that it is $\gg h$ (see Section~\ref{se:MainTermM} for details) --- here it is important that the level of distribution $U = X^{5/9}$ is a sufficiently large power of $X$; dealing with the error terms would be substantially simpler for $U = X^{1/2-\varepsilon}$ but the main term would be negative and thus the result useless.

Consequently Theorem~\ref{th:MT} follows once we have shown that
\begin{equation}
\label{eq:OutClaimE1}
\int_{X/2}^X \left|\sum_u \lambda_u^- E_u(x)\right|^2 dx \ll hX
\end{equation}
and
\begin{equation}
\label{eq:OutClaimE2}
\int_{X/2}^X \left| \sum_{\substack{z \leq p < 2X^{1/2}}} \left(1-\frac{\log p}{\log y}\right) \sum_u \lambda_{u, p}^+ E_{pu}(x)\right|^2 dx \ll hX.
\end{equation}
Proposition~\ref{prop:MeanSquare} below more-or-less reduces showing~\eqref{eq:OutClaimE1} to showing the following three claims:
\begin{align}
\label{eq:OutClaim1}
&\sum_{d \leq U} d \left( \sum_{\substack{u \leq U \\ u \equiv 0 \pmod{d}}} \frac{\lambda_{u}^-}{u}\right)^2 \ll \frac{1}{\log X},\\
\label{eq:OutClaim2}
&\sum_{0 < |k| \leq H} \left| \sum_{\substack{u_1, u_2 \leq U \\ (u_1, u_2) \mid k}} \lambda_{u_1}^- \lambda_{u_2}^- \left(\sum_{\substack{m_1 \\ X/2 < u_1 m_1 \leq X \\ u_1 m_1 \equiv  k \pmod{u_2}}} 1 - \frac{X/2}{[u_1, u_2]}\right)\right| \ll \frac{X}{\log X}, \\
\label{eq:OutClaim3}
& \sum_{n \in (X/2, X]} \left(\sum_{d \mid n}\lambda_d^-\right)^2 \ll \frac{X}{\log X}.
\end{align}
Here~\eqref{eq:OutClaim1} and~\eqref{eq:OutClaim3} follow from studying the structure of the sieve weights. Actually to make these estimates easier, we shall use the $\beta$-sieve with $\beta = 30$ for sieving small primes $p < X^\delta$ (see Section~\ref{se:set-up} for the choice of our sieve weights). 

Since $U$ is significantly larger than $X^{1/2}$, the claim~\eqref{eq:OutClaim2} is not obvious, but there is a well-known strategy for attacking it; for simplicity let us concentrate here on the case $(u_1, u_2) = 1$. We consider a weighted variant and use Poisson summation to the sum over $m_1 \equiv k \overline{u_1} \pmod{u_2}$ to relate it to averages of Kloosterman fractions of the type
\[
X \sum_{0 < |k| \leq H} \left|\sum_{\substack{u_1, u_2 \leq U \\ (u_1, u_2) = 1}} \frac{\lambda^-_{u_1} \lambda^-_{u_2}}{u_1 u_2} \sum_{0 < |\ell| \leq \frac{u_1 u_2}{X^{1-\varepsilon/20}}} e\left(\frac{k \ell \overline{u_1}}{u_2}\right) \right|.
\]
Such sums can be estimated using the work of Deshouillers and Iwaniec~\cite{DI} and its consequences. To apply these results, one needs some factorability properties of the coefficients $\lambda^-_{u_j}$. Here we can utilize the well-factorability of the linear sieve coefficients.

The claim~\eqref{eq:OutClaimE2} can be proved similarly, except in this case well-factorability is not so useful as $u$ is smaller. However, we can decompose the prime $p$ by Vaughan's identity and again finish by applying suitable bounds for averages of Kloosterman fractions. Also we will need to argue somewhat more carefully to avoid $\lambda^+_{p, u}$ depending on $p$, making it to depend only on a dyadic-type interval to which $p$ belongs.

In the above-mentioned work Mikawa~\cite{Mikawa} also used weighted sieve and estimates for Kloosterman sums but he did not take advantage of cancellations among the sieve weights for which reason he needed longer intervals (see Remark~\ref{rem:Greaves} below for more information about~\cite{Mikawa}).

\section{Setting up the sieves}
\label{se:set-up}
Let us introduce the set-up of Richert's~\cite{Richert} weighted sieve following~\cite[Chapter 25]{Opera}. For $x \in (X/2, X]$ and $2 \leq h \leq X^{1/100}$, write $\mathcal{A}(x) := (x-h\log X, x] \cap \mathbb{N}$ and, for any $z_0 \geq 2$, $P(z_0) := \prod_{p < z_0} p$. Define
\begin{equation}
\label{eq:defWzy}
D := X^{5/9}, \quad z := D^{1/4} = X^{5/36}, \quad y := D^{9/10} = X^{1/2},
\end{equation}
and
\[
w_n := 1-  \sum_{\substack{p \mid n \\ z \leq p < 2y}} \left(1-\frac{\log p}{\log y}\right).
\]
The coefficients $w_n$ have been chosen in such a way that we can prove that, for almost all $x \in (X/2, X]$,
\begin{equation}
\label{eq:wnaslowerbound}
\sum_{\substack{n \in \mathcal{A}(x) \\ (n, P(z)) = 1}} \mathbf{1}_{\Omega(n) \leq 2} \geq \frac{1}{2} \sum_{\substack{n \in \mathcal{A}(x) \\ (n, P(z)) = 1}} w_n
\end{equation}
and
\begin{equation}
\label{eq:wnhclaim}
\sum_{\substack{n \in \mathcal{A}(x) \\ (n, P(z)) = 1}} w_n \gg h.
\end{equation}
If we can show that these two claims hold for all $x \in (X/2, X]$ apart from an exceptional set of size $O(X/h)$, then Theorem~\ref{th:MT} clearly follows.

Let us first deduce~\eqref{eq:wnaslowerbound} which is much easier. We have, for $x \in (X/2, X]$,
\[
\begin{split}
\sum_{\substack{n \in \mathcal{A}(x) \\ (n, P(z)) = 1}} w_n &\leq \sum_{\substack{n \in \mathcal{A}(x) \\ (n, P(z)) = 1}} \left(1-  \sum_{\substack{p \mid n}} \left(1-\frac{\log p}{\log y}\right)\right) \leq \sum_{\substack{n \in \mathcal{A}(x) \\ (n, P(z)) = 1}} \left(1-  \left(\omega(n)-\frac{\log X}{\log y}\right)\right) \\
& = \sum_{\substack{n \in \mathcal{A}(x) \\ (n, P(z)) = 1}} \left(3 -  \omega(n) \right) \leq 2 \sum_{\substack{n \in \mathcal{A}(x) \\ (n, P(z)) = 1}} \mathbf{1}_{\omega(n) \leq 2}.
\end{split}
\]
There are only $\ll X/z$ integers $n \in (X/2, X]$ with $(n, P(z)) = 1$ for which $\Omega(n) > 2$ but $\omega(n) \leq 2$ (since such numbers are divisible by $p^2$ for some $p > z$). Hence, to deduce Theorem~\ref{th:MT} it indeed suffices to show that~\eqref{eq:wnhclaim} holds for all $x \in (X/2, X]$ apart from an exceptional set of measure $O(X/h)$.

Writing, for $\mathcal{B}\subseteq \mathbb{N}$,
\[
S(\mathcal{B}, z) := |\{n \in \mathcal{B}\colon (n, P(z)) =1\}| \quad \text{and} \quad \mathcal{B}_d = \{n \in \mathbb{N}\colon dn \in \mathcal{B}\},
\]
we have
\begin{equation}
\label{eq:wnSumSz}
\sum_{\substack{n \in \mathcal{A}(x) \\ (n, P(z)) = 1}} w_n = S(\mathcal{A}(x), z) -   \sum_{\substack{z \leq p < 2y}} \left(1-\frac{\log p}{\log y}\right) S(\mathcal{A}(x)_p, z).
\end{equation}

To find a lower bound for $S(\mathcal{A}(x), z)$, we introduce $\beta$-sieve and linear sieve weights (see e.g.~\cite[Section 6.4]{Opera}).
\begin{remark}
The reason that we do not use only the linear sieve is that using $\beta$-sieve (with e.g. $\beta = 30$) to sieve out primes $< X^\delta$ makes getting certain mean square estimates (like~\eqref{eq:alm-Claim} below) easier. However it is suggested in~\cite[between Corollary 6.28 and Proposition 6.29]{Opera} that one could prove such mean square estimates also for the linear sieve alone.

On the other hand, the reason that we do not use only the $\beta$-sieve with $\beta = 30$ is that the linear sieve leads to superior sieving results --- in particular our lower bound for~\eqref{eq:wnSumSz} would be negative if we only used the $\beta$-sieve.
\end{remark}

Let $\beta = 30$, let $\delta > 0$ be small and take $w = X^{\delta}$ and $E = X^{1/1000}$. Write also $P(w, z) = \prod_{w \leq p < z} p$ and define
\[
\begin{split}
\mathcal{D}^+ &:= \{d = p_1 \dotsm p_r \mid P(w, z) \colon p_1 > p_2 > \dotsc > p_r, p_1 \dotsm p_m p_m^2 < D \text{ for all odd $m$} \}, \\
\mathcal{D}^- &:= \{d = p_1 \dotsm p_r \mid P(w, z) \colon p_1 > p_2 > \dotsc > p_r, p_1 \dotsm p_m p_m^2 < D \text{ for all even $m$} \}, \\
\mathcal{E}^+ &:= \{e = p_1 \dotsm p_r \mid P(w) \colon p_1 > p_2 > \dotsc > p_r, p_1 \dotsm p_m p_m^\beta < E \text{ for all odd $m$} \}, \\
\mathcal{E}^- &:= \{e = p_1 \dotsm p_r \mid P(w) \colon p_1 > p_2 > \dotsc > p_r, p_1 \dotsm p_m p_m^\beta < E \text{ for all even $m$} \}.
\end{split}
\]
Now define the upper and lower bound linear sieve weights $\lambda_d^\pm = \mu(d) \mathbf{1}_{d \in \mathcal{D}^\pm}$ and the upper and lower bound $\beta$-sieve weights $\rho_e^\pm = \mu(e) \mathbf{1}_{e \in \mathcal{E}^\pm}$, so that, for any $n \in \mathbb{N}$, (see e.g.~\cite[Equations (6.26) and (6.27) with $\mathcal{A} = \{n\}$]{Opera})
\begin{equation}
\label{eq:sieves}
\begin{split}
\sum_{d \mid n} \lambda_d^- &\leq \mathbf{1}_{(n, P(w, z)) = 1} \leq \sum_{d \mid n} \lambda_d^+, \\
\sum_{e \mid n} \rho_e^- &\leq \mathbf{1}_{(n, P(w)) = 1} \leq \sum_{e \mid n} \rho_e^+.
\end{split}
\end{equation}

We cannot obtain a lower bound for $\mathbf{1}_{(n, P(z))}$ directly by multiplying the lower bounds for $\mathbf{1}_{(n, P(w, z)) = 1}$ and $\mathbf{1}_{(n, P(w)) = 1}$ since for some $n$ both lower bounds might be negative. However, we can use~\eqref{eq:sieves} to derive a lower bound for $\mathbf{1}_{(n, P(z)) = 1}$ that is familiar from the vector sieve (see e.g.~\cite[Lemma 10.1]{HarmanBook}):
\[
\begin{split}
\mathbf{1}_{(n, P(z)) = 1} &= \mathbf{1}_{(n, P(w, z)) = 1} \mathbf{1}_{(n, P(w)) = 1} \\
&= \left(\sum_{d \mid n} \lambda_d^+\right) \mathbf{1}_{(n, P(w)) = 1} - \left(\sum_{d \mid n} \lambda_d^+ - \mathbf{1}_{(n, P(w, z)) = 1}\right) \mathbf{1}_{(n, P(w)) = 1} \\
&\geq \sum_{d \mid n} \lambda_d^+ \sum_{e \mid n} \rho_e^- - \left(\sum_{d \mid n} \lambda_d^+ - \mathbf{1}_{(n, P(w, z)) = 1}\right) \sum_{e \mid n} \rho_e^+ \\
&\geq \sum_{d \mid n} \sum_{e \mid n} \left(\lambda_d^+ \rho_e^- - \lambda_d^+ \rho_e^+ + \lambda_d^- \rho_e^+ \right) = \sum_{k \mid n} \alpha_k^-,
\end{split}
\]
where
\begin{equation}
\label{eq:alpk-def}
\alpha_k^- = \mathbf{1}_{k \mid P(z)} \left( \lambda_{(k, P(w, z))}^+ \rho_{(k, P(w))}^- + \lambda_{(k, P(w, z))}^- \rho_{(k, P(w))}^+ - \lambda_{(k, P(w, z))}^+ \rho_{(k, P(w))}^+\right)
\end{equation}
say. Hence
\begin{equation}
\label{eq:nPzlow}
S(\mathcal{A}(x), z) \geq \sum_{d \mid P(z)} \alpha_d^- |\mathcal{A}(x)_d|.
\end{equation}
Note that $\alpha_k^{-}$ are supported on $k \leq DE$, so they are lower bound sieve weights with level $DE$.

Let us now turn to obtaining an upper bound for $S(\mathcal{A}(x)_p, z)$. If we can obtain level of distribution $DE$ for $\mathcal{A}(x)$, we can typically apply a sieve of level $DE/p$ to $\mathcal{A}(x)_p$. However, it will be technically convenient if the level is more stable when $p$ varies and if $p$ has a smooth weight.

To achieve this we introduce a smooth partition of the unity. Let $\psi \colon \mathbb{R}_+ \to [0, 1]$ be a smooth function such that $\psi(x) = 0$ for $x \leq 1$, $\psi(x) = 1$ for $x \geq \sqrt{2}$, and
\[
\frac{d^k}{dx^k} \psi(x) \ll_k 1 \quad \quad \text{for every $k \in \mathbb{N}$.}
\]
Defining then $\sigma \colon \mathbb{R}_{+} \to [0, 1]$ by
\begin{equation}
\label{eq:sigmadef}
\sigma(x) := 
\begin{cases}
\psi(x) & \text{if $0 < x \leq \sqrt{2}$;} \\
1-\psi\left(\frac{x}{\sqrt{2}}\right) & \text{if $x > \sqrt{2}$,}
\end{cases}
\end{equation}
the function $\sigma(x)$ is compactly supported in $[1, 2]$, and for all $x \in \mathbb{R}_+$ we have
\begin{equation}
\label{eq:sigmapart}
\sum_{a \in \mathbb{Z}} \sigma\left(\frac{x}{\sqrt{2}^a}\right) = 1,
\end{equation}
and
\begin{equation}
\label{eq:sigmaderbound}
\frac{d^k}{dx^k} \sigma(x) \ll_k 1 \quad \quad \text{for every $k \in \mathbb{N}$.}
\end{equation}
Consequently, writing 
\begin{equation}
\label{eq:Idef}
\mathcal{I} = \left[\left\lfloor \frac{\log z}{\log \sqrt{2}}\right\rfloor - 2, \left\lfloor \frac{\log y}{\log \sqrt{2}}\right\rfloor \right] \cap \mathbb{N},
\end{equation}
we have, for any $p \in \mathbb{P}$,
\begin{equation}
\label{eq:sumSigmaoverI} 
\sum_{a \in \mathcal{I}} \sigma\left(\frac{p}{\sqrt{2}^a}\right) \quad
\begin{cases}
= 1, & \text{if $z \leq p < y$;} \\
= 0, & \text{if $p < z/4$ or $p > 2y$;} \\
\in [0, 1], & \text{otherwise.}
\end{cases}
\end{equation}
Hence
\begin{equation}
\label{eq:AddSmoothing}
\sum_{\substack{z \leq p < 2y}} \left(1-\frac{\log p}{\log y}\right) S(\mathcal{A}(x)_p, z) \leq \sum_{a \in \mathcal{I}} \sum_p \sigma\left(\frac{p}{\sqrt{2}^a} \right)\left(1-\frac{\log p}{\log y}\right) S(\mathcal{A}(x)_p, z).
\end{equation}
Note that, for $a \in \mathcal{I}$, the smooth weight $\sigma(p/\sqrt{2}^a)$ is supported on 
\[
p \in [\sqrt{2}^a, \sqrt{2}^{a+2}] \subseteq [z/4, 2y].
\]

For $a \in \mathcal{I}$, let
\begin{equation}
\label{eq:Dadef}
D_a = D/\sqrt{2}^{a+2}
\end{equation}
and
\[
\begin{split}
\mathcal{D}_a^+ &:= \{d = p_1 \dotsm p_r \mid P(w, z) \colon p_1 > p_2 > \dotsc > p_r, p_1 \dotsm p_m p_m^2 < D_a \text{ for all odd $m$} \}.
\end{split}
\]
Define the upper bound linear sieve weights $\lambda^+_{d, a} = \mu(d) \mathbf{1}_{d \in \mathcal{D}^+_{a}}$, so that, for any $a \in \mathcal{I}$ and $n \in \mathbb{N}$,
\[
\mathbf{1}_{(n, P(w, z)) = 1} \leq \sum_{d \mid n} \lambda^+_{d, a}.
\] 
Recalling also~\eqref{eq:sieves} we see that, for any $a \in \mathcal{I}$ and $n \in \mathbb{N}$,
\begin{equation}
\label{eq:nP(z)upper}
\mathbf{1}_{(n, P(z)) = 1} = \mathbf{1}_{(n, P(w, z)) = 1} \mathbf{1}_{(n, P(w)) = 1} \leq \left(\sum_{\substack{d \mid n}}\lambda^+_{d, a} \right) \left(\sum_{e \mid n} \rho_e^+\right) = \sum_{k \mid n} \alpha_{k, a}^+,
\end{equation}
where
\begin{equation}
\label{eq:alphaka+def}
\alpha_{k,a}^+ := \mathbf{1}_{k \mid P(z)} \lambda_{(k, P(w, z)), a}^+ \rho_{(k, P(w))}^+.
\end{equation}
Note that $\alpha_{k, a}^+$ are supported on $k \leq D_a E = DE/\sqrt{2}^{a+2}$.

Combining~\eqref{eq:wnSumSz} and~\eqref{eq:AddSmoothing} and then using~\eqref{eq:nPzlow} and~\eqref{eq:nP(z)upper} we obtain 
\[
\begin{split}
\sum_{\substack{n \in \mathcal{A}(x) \\ (n, P(z)) = 1}} w_n &\geq S(\mathcal{A}(x), z) -  \sum_{a \in \mathcal{I}} \sum_p \sigma\left(\frac{p}{\sqrt{2}^a} \right)\left(1-\frac{\log p}{\log y}\right)  \sum_{n \in \mathcal{A}(x)_p} \mathbf{1}_{(n, P(z)) = 1} \\
&\geq \sum_{d \mid P(z)} \alpha_d^- |\mathcal{A}(x)_d| -  \sum_{a \in \mathcal{I}} \sum_p \sigma\left(\frac{p}{\sqrt{2}^a} \right)\left(1-\frac{\log p}{\log y}\right)  \sum_{d \mid P(z)} \alpha_{d, a}^+ |\mathcal{A}(x)_{dp}|.
\end{split}
\]
Writing, for $e \in \{d, dp\}$,
\[
|\mathcal{A}(x)_e| = \frac{h\log X}{e} + \left( |\mathcal{A}(x)_e| - \frac{h\log X}{e}\right)
\]
we see that, for every $x \in (X/2, X]$,
\[
\sum_{\substack{n \in \mathcal{A}(x) \\ (n, P(z)) = 1}} w_n \geq h \log X \cdot M(z, y) + E^-(x, y, z) - E^+(x, y, z),
\]
where
\[
\begin{split}
M(z, y) &:= \sum_{d \mid P(z)} \frac{\alpha_d^-}{d} -  \sum_{a \in \mathcal{I}} \sum_p \sigma\left(\frac{p}{\sqrt{2}^a} \right) \left(1-\frac{\log p}{\log y}\right) \sum_{d \mid P(z)} \frac{\alpha_{d, a}^+}{dp}, \\
E^-(x, y, z) &:= \sum_{d \mid P(z)} \alpha_d^- \left(|\mathcal{A}(x)_d|- \frac{h\log X}{d}\right), \\
E^+(x, y, z) &:=\sum_{a \in \mathcal{I}} \sum_p \sigma\left(\frac{p}{\sqrt{2}^a} \right) \left(1-\frac{\log p}{\log y}\right) \sum_{d \mid P(z)} \alpha_{d, a}^+ \left(|\mathcal{A}(x)_{dp}|-\frac{h\log X}{dp}\right).
\end{split}
\]

Hence, in order to establish that~\eqref{eq:wnhclaim} holds for all $x\in (X/2, X]$ apart from an exceptional set of measure $O(X/h)$, it suffices to show that
\begin{equation}
\label{eq:M(zy)claim}
M(z, y) \gg \frac{1}{\log X}
\end{equation}
and that
\begin{equation}
\label{eq:E(x)claim}
\int_{X/2}^{X} |E^\pm(x, y, z)|^2 dx \ll h X.
\end{equation}
We will establish~\eqref{eq:M(zy)claim} in Section~\ref{se:MainTermM}. In Section~\ref{se:Lemmas} we collect some lemmas needed in establishing~\eqref{eq:E(x)claim}. Then we will do some preliminary work on type I sums in almost all very short intervals in Section~\ref{se:TypeI} before establishing~\eqref{eq:E(x)claim} in Section~\ref{se:E(x)}.

\begin{remark}
\label{rem:Greaves}
We have not optimized the level of distribution or the sieve weights as the current set-up suffices for obtaining $P_2$-numbers. As pointed out to the author by James Maynard and Maksym Radziwi{\l\l}, it might be possible to alternatively use Greaves' most sophisticated weighted sieve~\cite{Greaves} together with Bettin-Chandee~\cite{BeCha} estimates for Kloosterman sums. In this alternative approach the estimation of $S_2$ from Proposition~\ref{prop:MeanSquare} below would be simpler whereas the sieve weights and thereby the estimation of $S_1$ would become more complicated.

On the other hand, after the completion of this work, the author realised, thanks to a comment by Andrew Granville, that it would probably suffice to use Kloosterman sum estimates based on the Weil bound as Mikawa~\cite{Mikawa} does. This would again simplify the treatment of $S_2$. However, our results in Section~\ref{se:TypeI} give better bilinear level of distribution in almost all short intervals, which might be of benefit for other applications, so we have decided to keep the current approach.
\end{remark}

\section{Handling the main term $M(z, y)$}
\label{se:MainTermM}
Take a small $\varepsilon' > 0$ and, for $z_0 \geq 2$, write $V(z_0) := \prod_{p < z_0} (1-1/p)$. Recall that $w = X^\delta$. By the fundamental lemma of the sieve (see e.g.~\cite[(6.31)--(6.33) and Lemma 6.8]{Opera}), we have, once $\delta$ is small enough in terms of $\varepsilon'$,
\begin{equation}
\label{eq:FLsieve}
(1-\varepsilon') V(w) \leq \sum_{e \mid P(w)} \frac{\rho_e^-}{e} \leq V(w) \leq \sum_{e \mid P(w)} \frac{\rho_e^+}{e} \leq (1+\varepsilon') V(w) 
\end{equation}

Let $F(s)$ and $f(s)$ be the linear sieve functions (see e.g. in~\cite[(12.1, 12.2)]{Opera}) so that in particular 
\begin{equation}
\label{eq:f(4)F(s)}
f(4) = e^\gamma (\log 3)/2 \quad \text{and, for $0 < s \leq 3$}, F(s) = 2e^\gamma/s.
\end{equation} 
By the linear sieve theory (see e.g.~\cite[(12.4, 12.5)]{Opera} --- note that~\cite[(12.4)]{Opera} actually holds for $s > 0$) we have
\begin{equation}
\label{eq:LinSieve}
\begin{split}
\sum_{d \mid P(w.z)} \frac{\lambda_{d,a}^+}{d} &\leq \left(F\left(\frac{\log D_a}{\log z}\right)+\varepsilon'\right) \prod_{w \leq p < z} \left(1-\frac{1}{p}\right), \\
\sum_{d \mid P(w.z)} \frac{\lambda_{d}^-}{d} &\geq \left(f\left(\frac{\log D}{\log z}\right)-\varepsilon'\right) \prod_{w \leq p < z} \left(1-\frac{1}{p}\right), \\
\sum_{d \mid P(w.z)} \frac{\lambda_{d}^+}{d} &\leq \left(F\left(\frac{\log D}{\log z}\right)+\varepsilon'\right) \prod_{w \leq p < z} \left(1-\frac{1}{p}\right).
\end{split}
\end{equation}
Recall that $z = D^{1/4}$. Hence, once $\delta$ is small enough in terms of $\varepsilon'$, the definition of $M(z, y)$ and the sieve bounds~\eqref{eq:FLsieve} and~\eqref{eq:LinSieve} imply that
\[
\begin{split}
\frac{M(z, y)}{V(z)} &= \frac{1}{V(z)} \Biggl( \sum_{d \mid P(w,z)} \frac{\lambda_d^-}{d} \sum_{e \mid P(w)} \frac{\rho_e^+}{e} + \sum_{d \mid P(w,z)} \frac{\lambda_d^+}{d} \left(\sum_{e \mid P(w)} \frac{\rho_e^-}{e}  - \sum_{e \mid P(w)} \frac{\rho_e^+}{e} \right)\\
& \qquad -  \sum_{a \in \mathcal{I}} \sum_p \sigma\left(\frac{p}{\sqrt{2}^a} \right) \left(1-\frac{\log p}{\log y}\right) \frac{1}{p} \sum_{d \mid P(w, z)} \frac{\lambda_{d, a}^+}{d}\sum_{e \mid P(w)} \frac{\rho_e^+}{e} \Biggr) \\
&\geq (f(4)- \varepsilon') (1-\varepsilon') - 2\varepsilon' \frac{V(w)}{V(z)}\sum_{d \mid P(w,z)} \frac{\lambda_d^+}{d} \\
& \quad -  \sum_{a \in \mathcal{I}} \sum_p \sigma\left(\frac{p}{\sqrt{2}^a} \right) \left(1-\frac{\log p}{\log y}\right) \frac{1}{p}  \left(F\left(\frac{\log D_a}{\log z}\right) + \varepsilon' \right) (1+\varepsilon') \\
&\geq f(4) -  \sum_{z < p \leq y} \left(1-\frac{\log p}{\log y}\right) \frac{1}{p} F\left(\frac{\log D/p}{\log z}\right) -100 \varepsilon'.
\end{split}
\]
Now we are in the situation of~\cite[Section 25.3 with $s = 4, u = 10/9,$ and $\eta = 1$]{Opera} but for completeness we evaluate the lower bound also here.

Plugging in the values from~\eqref{eq:f(4)F(s)} and evaluating the sums over $p$ by the prime number theorem and then substituting $t = D^\alpha$, we get
\[
\begin{split}
\frac{M(z, y)}{V(z)} &\geq \frac{e^\gamma \log 3}{2} - 2 e^\gamma \int_{z}^y \left(1-\frac{\log t}{\log y}\right) \frac{1}{t} \frac{\log D^{1/4}}{\log (D/t)} \frac{dt}{\log t} -200 \varepsilon' \\
&= \frac{e^\gamma \log 3}{2} - 2 e^\gamma \int_{1/4}^{9/10} \left(1-\frac{10\alpha}{9}\right) \frac{1}{4(1-\alpha)} \frac{d\alpha}{\alpha} -200 \varepsilon'.
\end{split}
\]
Evaluating the integral, we obtain
\[
\frac{M(z, y)}{V(z)} \geq \frac{e^{\gamma}}{2}\log 3 \left(1 - \frac{1}{\log 3}\left(\log(27) - \frac{10}{9} \log(15/2) \right)\right) - 200\varepsilon'.
\]
By a numerical calculation and~\eqref{eq:Mertens} below we see that indeed $M(z, y) \gg 1/\log X$ once $\varepsilon'$ is small enough.

\section{Auxiliary results}
\label{se:Lemmas}
Before turning to proving~\eqref{eq:E(x)claim} we collect here some known auxiliary results. We will use some standard estimates for multiplicative functions. Note first that, for any divisor-bounded (i.e. a function bounded by $d(n)^C$ for some $C$) multiplicative function $f \colon \mathbb{N} \to \mathbb{C}$, we have
\begin{equation}
\label{eq:f(n)/n_average}
\sum_{n \leq X} \frac{|f(n)|}{n} \ll \prod_{p \leq X} \left(1+\frac{|f(p)|}{p}\right).
\end{equation}
Furthermore, for $k \in \mathbb{R}$ and $z \geq w \geq 2$,
\begin{equation}
\label{eq:Mertens}
\prod_{w < p \leq z} \left(1+\frac{k}{p}\right) \asymp_k \left(\frac{\log z}{\log w}\right)^k.
\end{equation}
The following consequence of Shiu's~\cite{Shiu} bound allows us to estimate divisor sums.
\begin{lemma}
\label{le:Shiu}
Let $m \geq 1$ and let $X \geq z \geq 2$. Then
\[
\sum_{n \leq X} \tau(n)^m \mathbf{1}_{(n, P(z)) = 1} \ll_m \frac{X}{\log X} \cdot \left(\frac{\log X}{\log z}\right)^{2^m}.
\]
\end{lemma}
\begin{proof}
By Shiu's bound~\cite[Theorem 1]{Shiu}
\[
\begin{split}
\sum_{n \leq X} \tau(n)^m \mathbf{1}_{(n, P(z)) = 1} &\ll_m X \prod_{p \leq X} \left(1+\frac{2^m \mathbf{1}_{p > z}  - 1}{p}\right)
\end{split}
\]
and the claim follows immediately from~\eqref{eq:Mertens}.
\end{proof}

Next we record Vaughan's identity in a form that is convenient for us.
\begin{lemma}\label{le:Vaughan-identity}
Let $X \geq 2$. There exists $k \ll (\log X)^2$ such that, for each $X < n \leq 2X$, one has
\[
\Lambda(n) = \sum_{j=1}^k \sum_{n = uv} a_j(u) b_j(v),
\]
where $a_j(u)$ and $b_j(v)$ are real coefficients such that
\begin{enumerate}[(i)]
\item For each $j = 1, \dotsc, k$ and every $n$, one has $|a_j(n)|, |b_j(n)| \leq 1+\log n$.
\item For each $j = 1, \dotsc, k$, there exist $U_j \in (1/2, X^{1/2}]$ and $V_j \in [X^{1/2}/2, 2X]$ such that $a_j(u)$ are supported on $u \in (U_j, 2U_j]$ and $b_j(v)$ are supported on $v \in (V_j, 2V_j]$. Moreover $U_j V_j \in (X/4, 2X]$.
\item For each $j$ with $V_j \geq 4X^{2/3}$ one has $b_j(v) = \sigma(v/V_j) \log v$ or $b_j(v) = \sigma(v/V_j)$ where $\sigma(x)$ is as in~\eqref{eq:sigmadef}.
\end{enumerate}
\end{lemma}
\begin{proof}
By \cite[Proposition 13.4]{IwaKov} with $y = z = X^{1/3}$, we have, for $n \in (X, 2X]$,
\begin{equation}
\label{eq:VaugOrg}
\Lambda(n) = \sum_{\substack{n = ab \\ b \leq X^{1/3}}} \mu(b) \log a - \sum_{\substack{n = abc \\ b, c \leq X^{1/3}}} \mu(b) \Lambda(c) + \sum_{\substack{n = abc \\ b, c > X^{1/3}}} \mu(b) \Lambda(c) =:S_1(n) - S_2(n) + S_3(n),
\end{equation}
say.
Let us show that $S_2(n)$ can be written as a sum of $O((\log X)^2)$ sums of the form $\sum_{n = uv} a(u) b(v)$ with $a(u), b(v), U,$ and $V$ as $a_j(u), b_j(v), U_j,$ and $V_j$ above. One can deal with $S_1(n)$ and $S_3(n)$ similarly.

Consider $n \in (X, 2X]$. In $S_2(n)$ we write $bc = k$ and note that $k \in [1, X^{2/3}]$ and $a \in [X^{1/3}, 2X]$. We split the variable $k$ into dyadic ranges and make a smooth dyadic partition of the variable $a$ recalling~\eqref{eq:sigmapart}. We obtain, for $X < n \leq 2X$, 
\[
S_2(n) = \sum_{\substack{i, j \\ X^{1/3}/2 \leq \sqrt{2}^i \leq 2X \\ \frac{1}{2} \leq 2^j \leq X^{2/3}}} \sum_{\substack{n = ak}} \sigma\left(\frac{a}{\sqrt{2}^i}\right) \left(\mathbf{1}_{k \sim 2^j} \sum_{\substack{k = bc \\ b, c \leq X^{1/3}}} \mu(b) \Lambda(c)\right) .
\]
The first sum runs over $O((\log X)^2)$ pairs $(i, j)$. For each such pair, the sum over $n = ak$ is of the desired shape, with $U = \min\{2^j, \sqrt{2}^i\}$ and $V = \max\{2^j, \sqrt{2}^i\}$. In particular the requirement (i) holds since $\sum_{c \mid k} \Lambda(c) = \log k$.
\end{proof}

The following lemma gives two convenient consequences of the Poisson summation formula.
\begin{lemma}
\label{le:Poisson}
Let $f \colon \mathbb{R} \to \mathbb{R}$ be such that $f$ and $\widehat{f}$ are in $L^1(\mathbb{R})$ and have bounded variation.
\begin{enumerate}[(i)]
\item For any $u \in \mathbb{R}$ and $v \in \mathbb{R}^+$, one has
\[
\sum_{\substack{n \in \mathbb{Z}}} f\left(vn + u\right) = \frac{1}{v} \sum_{h \in \mathbb{Z}} \widehat{f}\left(\frac{h}{v}\right)e\left(\frac{uh}{v}\right). 
\]
\item
For any $a, q \in \mathbb{N}$ and $Y > 0$, one has
\[
\sum_{\substack{n \in \mathbb{Z} \\ n \equiv a \pmod{q}}} f\left(\frac{n}{Y}\right) = \frac{Y}{q} \sum_{h \in \mathbb{Z}} \widehat{f}\left(\frac{Y}{q} h \right)e\left(\frac{ah}{q}\right). 
\]
\end{enumerate}
\end{lemma}
\begin{proof}
Part (i) is~\cite[Formula (4.24)]{IwaKov}. Part (ii) follows from part (i) after writing 
\[
\sum_{\substack{n \in \mathbb{Z} \\ n \equiv a \pmod{q}}} f\left(\frac{n}{Y}\right)  = \sum_{m \in \mathbb{Z}} f\left(\frac{q}{Y}m + \frac{a}{Y}\right).
\]
\end{proof}

Let us finally record two lemmas that we use to bound averages of Kloosterman fractions. The first one is~\cite[Lemma 1]{DIzeta2} with $\varrho = 1$.
\begin{lemma}
\label{le:KloTypeII}
Let $C, D, U, V \geq 1$ and $|c(u, v)| \leq 1$. Then, for any $\varepsilon > 0$,
\[
\begin{split}
&\sum_{1 \leq c \leq C} \sum_{\substack{1 \leq d \leq D \\ (c, d) = 1}} \left|\sum_{1 \leq u \leq U} \sum_{\substack{1 \leq v \leq V \\ (v, c) = 1}} c(u, v) e\left(u \frac{\overline{vd}}{c}\right)\right| \\
&\ll (CDUV)^{1/2+\varepsilon}\left( (CD)^{1/2} + (U+V)^{1/4}\left(CD(U+V)(C+V^2) + UV^2D^2\right)^{1/4}\right).
\end{split}
\]
\end{lemma}

The second one is an immediate consequence of~\cite[Theorem 12]{DI}.
\begin{lemma}
\label{le:KloTypeI}
Let $C, D, N, R, S \geq 1/2$ and let $b_{n, r, s}$ be bounded complex coefficients. Let $g \colon \mathbb{R}^2 \to \mathbb{R}$ be a smooth compactly supported function such that
\begin{equation}
\label{eq:g2dimderbound}.
\left|\frac{\partial^{\nu_1 + \nu_2}}{\partial x_1^{\nu_1} \partial x_2^{\nu_2}} g(x_1, x_2)\right| \ll_{\nu_1, \nu_2} 1 \quad \quad \text{for every $\nu_1, \nu_2 \geq 0.$}
\end{equation}
Then, for any $\varepsilon > 0$,
\[
\begin{split}
&\sum_{\substack{R < r \leq 2R \\ S < s \leq 2S \\ (r, s) = 1}} \sum_{0 < n \leq N} b_{n, r, s}\sum_{\substack{c, d \\ (rd, sc) = 1}} g\left(\frac{c}{C}, \frac{d}{D}\right) e\left(n \frac{\overline{rd}}{sc}\right) \\
&\ll (CD)^\varepsilon (NRS)^{1/2+\varepsilon}\left(CS(RS+N)(C+DR)+C^2DS\sqrt{(RS+N)R} + D^2NR/S\right)^{1/2}.
\end{split}
\]
\end{lemma}

\section{Type I sums in almost all short intervals}
\label{se:TypeI}
We shall use the following general result as a starting point for showing~\eqref{eq:E(x)claim}.
\begin{proposition}
\label{prop:MeanSquare}
Let $X \geq H \geq 2$. Let $g \colon \mathbb{R} \to [0, 1]$ be a smooth function that is compactly supported on $[1/4, 2]$ and satisfies~\eqref{eq:gderivbound}. For $d \in \mathbb{N}$, define
\[
\gamma_{d, H} := \sum_{\substack{m \geq 1 \\ (m, d) = 1}} \left(\frac{d}{\pi m} \sin\left(\frac{\pi m H}{d}\right)\right)^2.
\]
Let $a_d \in \mathbb{R}$ be bounded for all $d$. Let $2 \leq D_0 \leq X^{1-\delta}$ for some fixed $\delta \in (0, 1)$. Then
\[
\int_{-\infty}^{\infty} g\left(\frac{x}{X}\right) \left(\sum_{\substack{d \leq D_0, m \in \mathbb{N} \\ x-H < dm \leq x}} a_d - H \sum_{d \leq D_0} \frac{a_d}{d}\right)^2 dx = S_1 + S_2 + S_3 + O(H^3 (\log X)^3),
\]
where
\[
\begin{split}
S_1 &:= 2\widehat{g}(0) X \sum_{d \leq D_0} \gamma_{d, H} \left( \sum_{\substack{m \leq D_0 \\ m \equiv 0 \pmod{d}}} \frac{a_{m}}{m}\right)^2,\\
S_2 &:= \sum_{0 < |k| \leq H} (H- |k|) \sum_{\substack{d_1, d_2 \leq D_0 \\ (d_1, d_2) \mid k}} a_{d_1} a_{d_2} \left(\sum_{\substack{m_1, m_2 \\ d_1 m_1 = d_2 m_2 + k}} g\left(\frac{d_1 m_1}{X}\right) - \frac{\widehat{g}(0) X}{[d_1, d_2]}\right), \\
S_3 &:= H \sum_{n} g\left(\frac{n}{X}\right) \left(\sum_{d \mid n}a_d\right)^2 - \widehat{g}(0) H X  \frac{1}{X^{10}} \sum_{n \leq X^{10}}  \left(\sum_{d \mid n}a_d\right)^2.
\end{split}
\]
\end{proposition}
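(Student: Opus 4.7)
The plan is to expand the inner square into a double sum over $(d_1, d_2)$, convert the resulting integral into a sum over pairs $(m_1, m_2)$ indexed by the shift $k := d_1 m_1 - d_2 m_2$, and then identify the three summands $S_1, S_2, S_3$. Writing $F_d(y) := \sum_m \mathbf{1}_{y - H < dm \leq y}$ and $E_d(y) := F_d(y) - H/d$, the integrand becomes $\sum_{d_1, d_2} \lambda_{d_1} \lambda_{d_2} E_{d_1}(y) E_{d_2}(y)$. Since $F_d$ is $d$-periodic, Fourier expansion combined with the rapid decay of $\widehat{g}$ yields $\int g(y/X) F_d(y)\,dy = HX\widehat{g}(0)/d + O_N(X^{-N})$ for $d \leq D_0 \leq X^{1-\delta}$. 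Expanding $E_{d_i} = F_{d_i} - H/d_i$, the three cross/constant terms then collapse into the single correction $A_0 := -H^2 X \widehat{g}(0) \sum_{d_1, d_2} \lambda_{d_1}\lambda_{d_2}/(d_1 d_2)$, leaving $\int g(y/X) F_{d_1}(y) F_{d_2}(y)\, dy$ to be handled.

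The intersection $[d_1 m_1, d_1 m_1 + H) \cap [d_2 m_2, d_2 m_2 + H)$ is non-empty iff $|k| < H$, and has length $H - |k|$; a first-order Taylor expansion replaces $\int g(y/X)\,dy$ over this interval by $(H-|k|) g(d_1 m_1/X)$ with per-term error $O(H(H-|k|)/X)$. Counting $O(X/[d_1, d_2])$ pairs $(m_1, m_2)$ per $k$, $O(H/(d_1, d_2))$ values of $k$, and using $\sum_{d_1, d_2 \leq D_0} 1/[d_1, d_2] = O((\log X)^3)$ for the $k = 0$ piece, the total Taylor error is $O(H^3 (\log X)^3)$. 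Next I would split the $k$-sum into $k = 0$ and $0 < |k| \leq H$. The $k = 0$ diagonal collapses to $H \sum_n g(n/X)(\sum_{d|n} \lambda_d)^2$, exactly the first term of $S_3$. For $0 < |k| \leq H$, adding and subtracting the expected density $\widehat{g}(0) X/[d_1, d_2]$ extracts $S_2$ by construction and leaves an "expected" residue
\[
B := \widehat{g}(0) X \sum_{d_1, d_2} \frac{\lambda_{d_1}\lambda_{d_2}}{[d_1, d_2]}\, T((d_1, d_2)), \qquad T(e) := \sum_{0 < |k| \leq H,\ e \mid k}(H - |k|).
\]

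The hard step is showing that $A_0 + B$ equals $S_1$ plus the second term of $S_3$. By the Fej\'er-kernel identity $\sum_{|k| < H}(H - |k|) e^{2\pi i k\theta} = \sin^2(\pi H \theta)/\sin^2(\pi\theta)$, orthogonality of additive characters mod $e$, and Parseval applied to the step function $E_e$ on $[0, e]$, one derives the key identity
\[
e T(e) - H^2 = 2\widetilde{\gamma}_{e, H} - eH, \qquad \widetilde{\gamma}_{e, H} := \sum_{f \mid e} \gamma_{f, H}.
\]
Substituting $(d_1, d_2) T((d_1, d_2)) - H^2 = 2 \widetilde{\gamma}_{(d_1, d_2), H} - H(d_1, d_2)$ into $A_0 + B = \widehat{g}(0) X \sum_{d_1, d_2} (\lambda_{d_1}\lambda_{d_2}/(d_1 d_2))\,[(d_1, d_2) T((d_1, d_2)) - H^2]$ and interchanging the order of summation, the $2\widetilde{\gamma}$-part assembles, via $\widetilde{\gamma}_{e, H} = \sum_{f \mid e} \gamma_{f, H}$, into exactly $S_1 = 2\widehat{g}(0) X \sum_f \gamma_{f, H}(\sum_{f \mid m} \lambda_m/m)^2$. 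The remaining $-H(d_1, d_2)$ part yields $-\widehat{g}(0) HX \sum_{d_1, d_2} \lambda_{d_1}\lambda_{d_2}/[d_1, d_2]$, which matches the second term of $S_3$ up to $O(HX^{-7})$ via the trivial estimate $X^{-10} \sum_{n \leq X^{10}}(\sum_{d \mid n}\lambda_d)^2 = \sum_{d_1, d_2} \lambda_{d_1}\lambda_{d_2}/[d_1, d_2] + O(X^{-8})$. The only nontrivial ingredient is thus the Fej\'er-Parseval identity for $T(e)$; the rest is expansion, Taylor approximation, and the divisor identity defining $\widetilde{\gamma}$.
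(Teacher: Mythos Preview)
Your proposal is correct and follows essentially the same structure as the paper's proof: expand the square, reduce the cross terms to $A_0$ via Poisson summation, rewrite the $F_{d_1}F_{d_2}$ term as a sum over shifts $k$ with a Taylor error of size $O(H^3(\log X)^3)$, split off $S_2$ and the first half of $S_3$, and then show that the residue $A_0+B$ equals $S_1$ plus the second half of $S_3$.

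The only substantive difference is in how you establish the identity $eT(e)-H^2=2\widetilde{\gamma}_{e,H}-eH$. The paper evaluates $\sum_{|k|\le H,\ e\mid k}(H-|k|)$ explicitly, recognises the fractional-part expression $\theta(1-\theta)$ with $\theta=\{H/e\}$, and then inserts the cosine Fourier series of $x(1-x)$ on $[0,1]$ to produce the $\sin^2$ terms; only afterwards does it regroup $k=em$ by $(k,e)$ to obtain $\widetilde{\gamma}_{e,H}=\sum_{f\mid e}\gamma_{f,H}$. Your route via Parseval for the $e$-periodic function $E_e$ is a neat shortcut: the Fourier coefficients $\widehat{F_e}(n)=(1-e^{-2\pi i nH/e})/(2\pi i n)$ give $|\widehat{E_e}(n)|^2=\sin^2(\pi nH/e)/(\pi n)^2$ directly, and comparing the $L^2$-norm with the autocorrelation $\int_0^e F_e^2 = H+T(e)$ yields the identity without the intermediate $\theta(1-\theta)$ step. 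One small caveat: your stated Fej\'er-kernel identity $\sum_{|k|<H}(H-|k|)e^{2\pi i k\theta}=\sin^2(\pi H\theta)/\sin^2(\pi\theta)$ is only valid for integer $H$, which is not assumed here (indeed $H=h\log X$ in the application); the Parseval argument, however, works for all real $H$, so your derivation goes through regardless.
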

\begin{remark}
This can be compared with~\cite[Proposition 6.25]{Opera} which is non-trivial for $D_0 < X^{1/2}(\log X)^{-C}$. For our choices of $a_d$ we will be able to estimate $S_j$ succesfully for a wider range of $D_0$.
\end{remark}

\begin{proof}[Proof of Proposition~\ref{prop:MeanSquare}]
We start by squaring out, obtaining
\begin{equation}
\label{eq:Ssquaredout}
\begin{split}
S &:= \int_{-\infty}^\infty g\left(\frac{x}{X}\right) \left(\sum_{\substack{d \leq D_0, m \in \mathbb{N} \\ x-H < dm \leq x}} a_d - H \sum_{d \leq D_0} \frac{a_d}{d}\right)^2 dx \\
&=  \int_{-\infty}^\infty g\left(\frac{x}{X}\right)  \left(\sum_{\substack{d \leq D_0, m \in \mathbb{N} \\ x-H < d m \leq x}} a_d\right)^2 dx - 2 H \sum_{d_1 \leq D_0} \frac{a_{d_1}}{d_1}  \int_{-\infty}^\infty g\left(\frac{x}{X}\right)  \left(\sum_{\substack{d_2 \leq D_0, m \in \mathbb{N} \\ x-H < d_2 m \leq x}} a_{d_2}\right) dx \\
& \qquad \qquad + H^2 \int_{-\infty}^\infty g\left(\frac{x}{X}\right) dx \left(\sum_{d \leq D_0} \frac{a_d}{d}\right)^2.
\end{split}
\end{equation}
Here
\begin{equation}
\label{eq:midtermtypeIProp}
\begin{split}
&\int_{-\infty}^\infty g\left(\frac{x}{X}\right)  \left(\sum_{\substack{d_2 \leq D_0, m \in \mathbb{N} \\ x-H < d_2 m \leq x}} a_{d_2}\right) dx = \sum_{d_2 \leq D_0} a_{d_2} \sum_m \int_{d_2 m}^{d_2 m + H}  g\left(\frac{x}{X}\right) dx \\
& = \sum_{d_2 \leq D_0} a_{d_2} \int_0^H \sum_m g\left(\frac{d_2 m + t}{X}\right) dt.
\end{split}
\end{equation}
Applying the Poisson summation (Lemma~\ref{le:Poisson}(i)) and~\eqref{eq:gFourbound} we get, for every $d_2 \leq D_0 \leq X^{1-\delta}$,
\[
\begin{split}
\sum_m g\left(\frac{d_2 m + t}{X}\right) &= \frac{X}{d_2} \sum_{h \in \mathbb{Z}} \widehat{g}\left(\frac{hX}{d_2}\right)e\left(\frac{th}{d_2}\right) = \frac{X}{d_2} \widehat{g}(0) + O\left(\frac{X}{d_2} \sum_{\substack{h \in \mathbb{Z} \\ h \neq 0}} \left(\frac{d_2}{hX}\right)^{\frac{10}{\delta}}\right) \\
&= \frac{X}{d_2} \widehat{g}(0) + O(X^{-9}).
\end{split}
\]
Using this in~\eqref{eq:midtermtypeIProp} we see that
\[
\int_{-\infty}^\infty g\left(\frac{x}{X}\right)  \left(\sum_{\substack{d_2 \leq D_0, m \in \mathbb{N} \\ x-H < d_2 m \leq x}} a_{d_2}\right) dx = HX \widehat{g}(0) \sum_{d_2 \leq D_0} \frac{a_{d_2}}{d_2} + O(HD_0 X^{-9}).
\]
Substituting this into~\eqref{eq:Ssquaredout}, we obtain
\[
\begin{split}
S &= \int_{-\infty}^{\infty} g\left(\frac{x}{X}\right) \left(\sum_{\substack{d \leq D_0, m \in \mathbb{N} \\ x-H < dm \leq x}} a_d\right)^2 dx - H^2 X \widehat{g}(0) \left(\sum_{d \leq D_0} \frac{a_d}{d}\right)^2 + O\left(\frac{H^2 D_0 \log X}{X^{9}}\right).
\end{split}
\]
Squaring out, the first term equals
\[
\begin{split}
&\sum_{\substack{d_1, d_2 \leq D_0 \\ m_1, m_2 \\ |d_1 m_1 - d_2 m_2| \leq H}} a_{d_1} a_{d_2} \int_{-\infty}^{\infty} g\left(\frac{x}{X}\right)\mathbf{1}_{x-H < d_1m_1, d_2m_2 \leq x} dx  \\
&= \sum_{|k| \leq H}   \sum_{\substack{d_1, d_2 \leq D_0 \\ m_1, m_2 \\ d_1 m_1 = d_2 m_2 + k}}  a_{d_1} a_{d_2} \cdot (H- |k|)  \left(g\left(\frac{d_1 m_1}{X}\right) + O\left(\frac{H}{X}\right)\right)  \\
&= \sum_{|k| \leq H} (H- |k|) \sum_{\substack{d_1, d_2 \leq D_0 \\ (d_1, d_2) \mid k}}  a_{d_1} a_{d_2} \sum_{\substack{m_1, m_2 \\ d_1 m_1 = d_2 m_2 + k}} g\left(\frac{d_1 m_1}{X}\right) +  O\left(\frac{H^2}{X}\sum_{|k| \leq H} \sum_{X/4 \leq n \leq 2X} \tau(n) \tau(n+k)\right).
\end{split}
\]
The error term here is by the inequality $|xy| \leq x^2 + y^2$ and the Shiu bound (Lemma~\ref{le:Shiu}) 
\[
\ll \frac{H^2}{X}\sum_{|k| \leq H} \sum_{X/4 \leq n \leq 2X} (\tau(n)^2 + \tau(n+k)^2) \ll \frac{H^3}{X} \sum_{X/5 \leq n \leq 3X} \tau(n)^2 \ll H^3 (\log X)^3.
\]
Consequently, subtracting and adding the expected main term,
\begin{equation}
\label{eq:Smiddle}
\begin{split}
S &= \sum_{|k| \leq H} (H- |k|) \sum_{\substack{d_1, d_2 \leq D_0 \\ (d_1, d_2) \mid k}}  a_{d_1} a_{d_2} \left(\sum_{\substack{m_1, m_2 \\ d_1 m_1 = d_2 m_2 + k}} g\left(\frac{d_1 m_1}{X}\right) - \frac{\widehat{g}(0) X}{[d_1, d_2]}\right) \\
&+ \widehat{g}(0) X \sum_{|k| \leq H} (H- |k|) \sum_{\substack{d_1, d_2 \leq D_0 \\ (d_1, d_2) \mid k}} \frac{a_{d_1} a_{d_2}}{[d_1, d_2]} - H^2 X \widehat{g}(0) \left(\sum_{d \leq D_0} \frac{a_d}{d}\right)^2 + O(H^3 (\log X)^3).
\end{split}
\end{equation}
The $k \neq 0$ summands of the first line contribute $S_2$ whereas the $k=0$ summand equals
\[
\begin{split}
&H \sum_{\substack{d_1, d_2 \leq D_0}}  a_{d_1} a_{d_2} \left(\sum_{\substack{m_1, m_2 \\ d_1 m_1 = d_2 m_2}} g\left(\frac{d_1 m_1}{X}\right) - \frac{\widehat{g}(0) X}{[d_1, d_2]}\right) \\
&= H \sum_{n} g\left(\frac{n}{X}\right) \left(\sum_{d \mid n} a_d\right)^2 - \widehat{g}(0)HX  \sum_{d_1, d_2 \leq D_0} a_{d_1} a_{d_2} \frac{1}{X^{10}} \sum_{\substack{n \leq X^{10} \\ [d_1, d_2] \mid n}} 1 + O(1) \\
&= S_3 + O(1).
\end{split}
\]

Hence it suffices to show that the main term on the second line of~\eqref{eq:Smiddle} contributes $S_1$, i.e.
\begin{equation}
\label{eq:S1Claim1}
\begin{split}
&\widehat{g}(0) X \sum_{\substack{d_1, d_2 \leq D_0}} \frac{a_{d_1} a_{d_2}}{d_1 d_2} \left((d_1, d_2)\sum_{\substack{|k| \leq H \\ (d_1, d_2) \mid k}} (H- |k|) - H^2\right) = S_1.
\end{split}
\end{equation}
Here 
\[
\begin{split}
\sum_{\substack{|k| \leq H \\ (d_1, d_2) \mid k}} (H-|k|) &= H + 2 \sum_{\substack{1 \leq r \leq \lfloor H / (d_1, d_2) \rfloor }} (H- r(d_1, d_2)) \\
&= H + 2 \left\lfloor \frac{H}{(d_1, d_2)}\right\rfloor \frac{H-(d_1, d_2) + H - \left\lfloor \frac{H}{(d_1, d_2)} \right\rfloor (d_1, d_2)}{2} \\
&= H + \left\lfloor \frac{H}{(d_1, d_2)}\right\rfloor \left(2H-(d_1, d_2)- \left\lfloor \frac{H}{(d_1, d_2)} \right\rfloor (d_1, d_2)\right).
\end{split}
\]
Writing $\theta_{d_1, d_2} := \frac{H}{(d_1, d_2)} - \lfloor \frac{H}{(d_1, d_2)} \rfloor$, this equals
\[
H + \left(\frac{H}{(d_1, d_2)} - \theta_{d_1, d_2}\right) (H-(d_1, d_2) + \theta_{d_1, d_2}(d_1, d_2)) = \frac{H^2}{(d_1, d_2)} + (d_1, d_2) \theta_{d_1, d_2}(1-\theta_{d_1, d_2}),
\]
so~\eqref{eq:S1Claim1} reduces to the claim
\begin{equation}
\label{eq:RMainTerm}
\widehat{g}(0) X \sum_{\substack{d_1, d_2 \leq D_0}} \frac{a_{d_1} a_{d_2}}{d_1 d_2}(d_1, d_2)^2 \theta_{d_1, d_2}(1-\theta_{d_1, d_2})= S_1.
\end{equation}
Writing $\psi(x)$ for the one-periodic function which is $x(1-x)$ for $x \in [0, 1]$ we see that
$\theta_{d_1, d_2}(1-\theta_{d_1, d_2}) = \psi(H/(d_1, d_2))$. It is easy to see that we have the Fourier expansion
\[
\begin{split}
\psi(x) &= \frac{1}{6} - \frac{1}{2\pi^2} \sum_{k \neq 0} \frac{1}{k^2} e(kx) = \frac{1}{6} - \frac{1}{\pi^2} \sum_{k \geq 1} \frac{\cos(2\pi k x)}{k^2} \\
&= \frac{1}{6} - \frac{1}{\pi^2} \sum_{k \geq 1} \frac{1-2\sin(\pi k x)^2}{k^2} = \frac{2}{\pi^2} \sum_{k \geq 1} \frac{\sin(\pi k x)^2}{k^2}.
\end{split}
\]
Hence the left hand side of~\eqref{eq:RMainTerm} equals
\[
2\widehat{g}(0) X \sum_{\substack{d_1, d_2 \leq D_0}} \frac{a_{d_1} a_{d_2}}{d_1 d_2} \sum_{k \geq 1} \left(\frac{(d_1, d_2)}{\pi k} \sin\left(\frac{\pi k H}{(d_1, d_2)}\right)\right)^2.
\]
Writing $k = e m$ with $e = (k, (d_1, d_2))$, this equals
\[
2\widehat{g}(0) X  \sum_{\substack{d_1, d_2 \leq D_0}} \frac{a_{d_1} a_{d_2}}{d_1 d_2} \sum_{\substack{e \geq 1 \\ e \mid (d_1, d_2)}} \sum_{\substack{m \geq 1 \\ (m, (d_1, d_2)/e) = 1}} \left(\frac{(d_1, d_2)}{\pi em} \sin\left(\frac{\pi em H}{(d_1, d_2)}\right)\right)^2.
\]
Substituting $d = (d_1, d_2)/e$, this is
\[
\begin{split}
&2\widehat{g}(0)X \sum_{\substack{d_1, d_2 \leq D_0}} \frac{a_{d_1} a_{d_2}}{d_1 d_2} \sum_{\substack{d \geq 1 \\ d \mid (d_1, d_2)}} \sum_{\substack{m \geq 1 \\ (m, d) = 1}} \left(\frac{d}{\pi m} \sin\left(\frac{\pi m H}{d}\right)\right)^2 \\
&= 2\widehat{g}(0)X \sum_{d \leq D_0} \gamma_{d, H} \sum_{\substack{d_1, d_2 \leq D_0 \\ d \mid (d_1, d_2)}} \frac{a_{d_1} a_{d_2}}{d_1 d_2} = S_1,
\end{split}
\]
and~\eqref{eq:RMainTerm} follows.
\end{proof}

In order to estimate $S_2$ we shall use Lemmas~\ref{le:KloTypeII} and~\ref{le:KloTypeI} that are consequences of the work of Deshouillers and Iwaniec~\cite{DI} on averages of Kloosterman sums. The following two lemmas and their proofs have very much in common with~\cite[Theorems 5 and 7]{BFI1} and~\cite{DIzeta1, DIzeta2}. Note that~\cite[Theorem 5]{BFI1} was used in a similar context in~\cite{MatoSmooth} whereas results from~\cite{DIzeta1, DIzeta2} have been used in studying almost all intervals of length $X^\theta$ (see e.g.~\cite{Jia}).

It will suffice to study $S_2$ with $a_d$ replaced by a type II sequence (i.e. $a_d = \sum_{d = mn} \alpha_m \beta_n$ for some complex coefficients $\alpha_m, \beta_n$ supported on certain ranges) and with $a_d$ replaced by a type I sequence (i.e. $a_d = \sum_{d = mn} \alpha_m$ with $\alpha_m$ supported on small and medium sized $m$).
 
In type II case we shall use the following lemma.
\begin{lemma}
\label{le:TypeIISpec}
Let $2 \leq H \leq X^{1/60}$, and let $g$ be a smooth compactly supported function satisfying~\eqref{eq:gderivbound}. Let $\alpha_m, \beta_n$ and $\gamma_q$ be bounded complex coefficients and $M, N, Q \geq 1$. Assume that 
\begin{equation}
\label{eq:typeIIcondSpec}
N \leq M \ll X^{21/50} \quad \text{and} \quad \max\{MN, Q\} \ll X^{14/25}.
\end{equation}
Then
\begin{equation}
\label{eq:typeIIBoundSpec}
\begin{split}
&\sum_{0 < |k| \leq H} \left| \sum_{\substack{m \sim M \\ n \sim N}} \alpha_m \beta_n \sum_{\substack{q \sim Q \\ (mn, q) \mid k}} \gamma_q \left(\sum_{\substack{\ell \\ \ell mn = k \pmod{q}}} g\left(\frac{\ell mn}{X}\right) - \frac{\widehat{g}(0) X}{[mn, q]}\right)\right| \ll X^{1-1/900}.
\end{split}
\end{equation}
\end{lemma}
This will readily follow from the following bound.
\begin{lemma}
\label{le:TypeII}
Let $\varepsilon >0$, let $X \geq H \geq 2$, and let $g$ be a smooth compactly supported function satisfying~\eqref{eq:gderivbound}. Let $\alpha_m, \beta_n$ and $\gamma_q$ be bounded complex coefficients and $M, N, Q \geq 1$. Then
\begin{equation}
\label{eq:typeIIClaim}
\begin{split}
&\sum_{0 < |k| \leq H} \left| \sum_{\substack{m \sim M \\ n \sim N}} \alpha_m \beta_n \sum_{\substack{q \sim Q \\ (mn, q) \mid k}} \gamma_q \left(\sum_{\substack{\ell \\ \ell mn = k \pmod{q}}} g\left(\frac{\ell mn}{X}\right) - \frac{\widehat{g}(0) X}{[mn, q]}\right)\right| \\ 
&\ll H^{1/2} X^{1/2+\varepsilon/10}  \Biggl[(MQ)^2 + \left(\frac{HMNQ}{X} + N\right) \\
&\qquad \qquad \qquad \cdot \left[MQ \left(\frac{HMNQ}{X} + N \right) (Q + N^2) + \frac{H(MN)^3Q}{X}\right] \Biggr]^{1/4}.
\end{split}
\end{equation}
\end{lemma}
\begin{proof}[Proof of Lemma~\ref{le:TypeIISpec} assuming Lemma~\ref{le:TypeII}]
Writing $W = \max\{MN, Q\}$ and noticing that $N^2 \leq MN \leq W$, we see that
\[
\begin{split}
&\left(\frac{HMNQ}{X} + N\right) \left[MQ \left(\frac{HMNQ}{X} + N \right) (Q + N^2) + \frac{H(MN)^3Q}{X}\right] \\
&\ll \left(\frac{HW^2}{X} + N\right) \left[\left(\frac{HW^2}{X} + N \right) MW^2 + \frac{HW^4}{X}\right] \ll \left(\frac{HW^2}{X} + N\right)^2 MW^2 \\
&\ll \frac{H^2 M W^6}{X^2} + N W^3 \ll H^2 X^{89/50} + W^{7/2} \ll H^2 X^{89/50} + X^{49/25}. 
\end{split}
\]
Hence Lemma~\ref{le:TypeII} implies that the left hand side of~\eqref{eq:typeIIBoundSpec} is
\[
\ll H^{1/2} X^{1/2+1/10000} (X^{49/100} + H^{1/2} X^{89/200})
\]
and the claim follows since $H \leq X^{1/60}$.
\end{proof}
\begin{proof}[Proof of Lemma~\ref{le:TypeII}]
Writing $\delta = (mn, q)$, the left hand side of~\eqref{eq:typeIIClaim} is at most
\[
\begin{split}
&\sum_{0 < \delta \leq H} \sum_{0 < |k| \leq H/\delta} \left| \sum_{\substack{m \sim M \\ n \sim N \\ \delta \mid mn}} \alpha_m \beta_n \sum_{\substack{c \sim Q/\delta \\ (c, mn/\delta) = 1}} \gamma_{\delta c} \left(\sum_{\substack{\ell \\ \ell \frac{mn}{\delta} \equiv k \pmod{c}}} g\left(\frac{\ell mn}{X}\right) - \frac{\widehat{g}(0) X}{cmn}\right)\right|.
\end{split}
\]
The summation condition can be rewritten as $\ell \equiv k \overline{\frac{mn}{\delta}} \pmod{c}$. Thus, by Poisson summation (Lemma~\ref{le:Poisson}(ii)), this equals
\begin{equation}
\label{eq:typeIIAfterPois}
\sum_{0 < \delta \leq H} \sum_{0 < |k| \leq H/\delta} \left| \sum_{\substack{m \sim M \\ n \sim N \\ \delta \mid mn}} \alpha_m \beta_n \sum_{\substack{c \sim Q/\delta \\ (c, mn/\delta) = 1}} \gamma_{\delta c} \frac{X}{cmn} \sum_{\substack{\ell \in \mathbb{Z} \\ \ell \neq 0}} \widehat{g}\left(\frac{\ell X}{cmn}\right) e\left(\frac{k \ell \overline{mn/\delta}}{c}\right)\right|.
\end{equation}
By~\eqref{eq:gFourbound} the contribution of $|\ell| > \frac{MNQ}{\delta X^{1-\varepsilon/20}}$ is
\[
\ll H X (\log X)^4 \sum_{|\ell| > \frac{MNQ}{\delta X^{1-\varepsilon/20}}} \left(\frac{MNQ}{\delta X \ell}\right)^{200/\varepsilon} \ll X^{-5}.
\]  
We write in~\eqref{eq:typeIIAfterPois} $\mu = (m, \delta)$ and $\nu = \delta/\mu$ so that $m = \mu d$ for some $d \in \mathbb{N}$ with $(d, \nu) = 1$. Since $\delta \mid mn$, we must have $\nu \mid n$ and thus we can write $n = \nu v$ for some $v \in \mathbb{N}$. Then $mn/\delta = dv$ so that $(dv, c) = 1$. With this notation the part of~\eqref{eq:typeIIAfterPois} with $0 < |\ell| \leq \frac{MNQ}{\delta X^{1-\varepsilon/20}}$ equals, for certain bounded coefficients $c_{k, \delta}$,
\begin{equation}
\label{eq:typeIILHSmi}
\begin{split}
E_{II} &:= X \sum_{0 < \delta \leq H} \sum_{0 < |k| \leq H/\delta} c_{k, \delta} \sum_{\delta = \mu \nu} \sum_{\substack{c \sim Q/\delta}} \frac{\gamma_{\delta c}}{c} \sum_{\substack{d \sim M/\mu \\ (d, c\nu) = 1}} \frac{\alpha_{\mu d}}{\mu d} \sum_{\substack{v \sim N/\nu \\ (v, c) = 1}} \frac{\beta_{\nu v}}{\nu v} \\
& \qquad \qquad \cdot \sum_{0 < |\ell| \leq \frac{MNQ}{\delta X^{1-\varepsilon/20}}} \widehat{g}\left(\frac{\ell X}{\delta c d v }\right) e\left(\frac{k \ell \overline{d v}}{c}\right).
\end{split}
\end{equation}
We write $u = k\ell$, and, in order to separate the variables $u$ and $v$ from the remaining ones,
\[
\widehat{g}\left(\frac{\ell X}{\delta c d v}\right) = \int_{-\infty}^\infty g(\xi) e \left(-\frac{\ell X}{\delta c d v} \xi \right) d\xi = \frac{\delta c d}{X} \int_{-\infty}^\infty g\left(\xi \frac{\delta c d}{X}\right) e \left(-\frac{\ell}{v} \xi\right) d\xi.
\]

Hence
\[
E_{II} \ll \frac{X^{1+\varepsilon/60}}{MNQ} \sum_{\substack{\mu, \nu \\ 0 < \mu \nu \leq H}} \mu \nu \max_{\xi \asymp \frac{X\mu}{MQ}} \sum_{\substack{c \sim \frac{Q}{\mu \nu}}} \sum_{\substack{d \sim M/\mu \\ (d, c) = 1}} \left| \sum_{0 < |u| \leq \frac{H MNQ}{\mu^2\nu^2 X^{1-\varepsilon/20}}} \sum_{\substack{v \sim N/\nu \\ (v, c) = 1}}  \alpha_{\xi, \mu, \nu} (u, v) e\left(\frac{u \overline{d v}}{c}\right)\right|,
\]
where
\[
\alpha_{\xi, \mu, \nu} (u, v) := N \frac{\beta_{\nu v}}{\nu v} \frac{1}{X^{\varepsilon/60}} \sum_{\substack{u = k\ell \\ 0 < |k| \leq \frac{H}{\mu \nu} \\ 0 < |\ell| \leq \frac{MNQ}{\mu \nu X^{1-\varepsilon/20}}}} c_{k, \mu \nu}  e\left(-\frac{\ell}{v} \xi\right) \ll 1.
\]

Now, for each $\mu$ and $\nu$, we apply Lemma~\ref{le:KloTypeII} with 
\[
C = \frac{2Q}{\mu \nu}, \quad D = \frac{2M}{\mu}, \quad V = \frac{2N}{\nu}, \quad \text{and} \quad U = \frac{HMNQ}{\mu^2 \nu^2 X^{1-\varepsilon/20}}. 
\]
Notice that in this notation we have $CDUV = 8H(MNQ)^2/(\mu^4 \nu^4 X^{1-\varepsilon/20})$. Hence, by Lemma~\ref{le:KloTypeII},
\[
\begin{split}
E_{II} & \ll H^{1/2} X^{1/2+\varepsilon/20} \sum_{\substack{\mu, \nu \\ 0 < \mu \nu \leq H}} \frac{1}{\mu \nu} \Biggl[\left(\frac{MQ}{\mu^2 \nu}\right)^{1/2} + \left(\frac{HMNQ}{\mu^2 \nu^2 X^{1-\varepsilon/20}} + \frac{N}{\nu}\right)^{1/4} \\
& \qquad \cdot \left[\frac{MQ}{\mu^2 \nu} \left(\frac{HMNQ}{\mu^2 \nu^2 X^{1-\varepsilon/20}} + \frac{N}{\nu}\right) \left(\frac{Q}{\mu \nu} + \left(\frac{N}{\nu}\right)^2\right) + \frac{HMNQ}{\mu^2 \nu^2 X^{1-\varepsilon/20}} \left(\frac{MN}{\mu \nu}\right)^2\right]^{1/4}\Biggr].
\end{split}
\]
The sums over $\delta$ and $\mu$ clearly contribute $O(X^{\varepsilon/100})$. Hence the above is
\[
\begin{split}
&\ll H^{1/2} X^{1/2+\varepsilon/10}  \Biggl[(MQ)^{1/2} + \left(\frac{HMNQ}{X} + N\right)^{1/4} \\
&\qquad \cdot \left[MQ \left(\frac{HMNQ}{X} + N \right) (Q + N^2) + \frac{H(MN)^3Q}{X}\right]^{1/4} \Biggr],
\end{split}
\]
as claimed.
\end{proof}

In the type I case (i.e. when studying $S_2$ from Proposition~\ref{prop:MeanSquare} with $a_d$ replaced by a type I sequence $a_d = \sum_{d = mn} \alpha_m$ with $\alpha_m$ supported on small and medium sized $m$) we shall use the following lemma. 
\begin{lemma}
\label{le:TypeISpec}
Let $2 \leq H \leq X^{1/60}$, let $\alpha_m$ and $\gamma_r$ be bounded complex coefficients, and let $M, N, Q, R \geq 1/2$ be such that
\begin{equation}
\label{eq:TypeISpecCond}
\max\{MN, QR\} \ll X^{31/50} \quad \text{and} \quad \max\{M, R\} \ll X^{6/25}.
\end{equation}
Let $g_1, g_2, g_3 \colon (0, \infty) \to \mathbb{R}$ be smooth compactly supported functions such that~\eqref{eq:gderivbound} holds for $g = g_j$ for each $j \in \{1, 2, 3\}$. Then
\begin{equation} 
\label{eq:TypeIClaimSpec}
\begin{split}
&\sum_{0 < |k| \leq H} \Biggl| \sum_{\substack{m \sim M \\ n}} \alpha_m g_1\left(\frac{n}{N}\right) \sum_{\substack{q \\ r \sim R \\ (qr, mn) \mid k}} \gamma_r g_2\left(\frac{q}{Q}\right) \\
& \qquad \qquad \qquad \Biggl(\sum_{\substack{\ell \\ \ell mn = k \pmod{qr}}} g_3\left(\frac{\ell mn}{X}\right) - \frac{\widehat{g_3}(0) X}{[mn, qr]}\Biggr)\Biggr| \ll X^{1-1/900}.
\end{split}
\end{equation}
\end{lemma}
Again this follows from a more general result.
\begin{lemma}
\label{le:TypeI}
Let $\varepsilon > 0$, let $X \geq H \geq 2$, let $\alpha_m$ and $\gamma_r$ be bounded complex coefficients, and let $M, N, Q, R \geq 1/2$ be such that $Q \ll MN$. Let $g_1, g_2, g_3 \colon (0, \infty) \to \mathbb{R}$ be smooth compactly supported functions such that~\eqref{eq:gderivbound} holds for $g = g_j$ for each $j \in \{1, 2, 3\}$. Then
\begin{equation}
\label{eq:TypeIClaim}
\begin{split}
&\sum_{0 < |k| \leq H} \Biggl| \sum_{\substack{m \sim M \\ n}} \alpha_m g_1\left(\frac{n}{N}\right) \sum_{\substack{q \\ r \sim R \\ (qr, mn) \mid k}} \gamma_r g_2\left(\frac{q}{Q}\right) \Biggl(\sum_{\substack{\ell \\ \ell mn = k \pmod{qr}}} g_3\left(\frac{\ell mn}{X}\right) - \frac{\widehat{g_3}(0) X}{[mn, qr]}\Biggr)\Biggr| \\
&\ll H^{1/2} X^{1/2+\varepsilon} \Biggl[R \left(MR + \frac{HMNQR}{X}\right)M + Q R \sqrt{\left(MR + \frac{HMNQR}{X}\right) M} + \frac{HM^2N^2}{X}\Biggr]^{1/2}.
\end{split}
\end{equation}
\end{lemma}

\begin{proof}[Proof of Lemma~\ref{le:TypeISpec} assuming Lemma~\ref{le:TypeI}]
We consider two cases.

\textbf{Case 1 ($Q \leq MN$):} We write $W = \max\{MN, QR\}$ and $U = \max\{M, R\}$. Then
\[
\begin{split}
&R \left(MR + \frac{HMNQR}{X}\right)M + Q R \sqrt{\left(MR + \frac{HMNQR}{X}\right) M} + \frac{HM^2N^2}{X} \\
&\ll U^2\left(U^2 + \frac{HW^2}{X}\right) + W\sqrt{\left(U^2 + \frac{HW^2}{X}\right) U} + \frac{HW^2}{X} \\
&\ll U^4 + H\frac{U^2 W^2}{X} + U^{3/2} W + H^{1/2} \frac{U^{1/2} W^2}{X^{1/2}} \ll X^{49/50} + H X^{18/25} + H^{1/2} X^{43/50}. 
\end{split}
\]
Hence by Lemma~\ref{le:TypeI} the left hand side of~\eqref{eq:TypeIClaimSpec} is
\[
\ll H^{1/2} X^{1/2+ 1/10000} \left(X^{49/50} + H X^{18/25} + H^{1/2} X^{43/50}\right)^{1/2} 
\]
and the claim follows since $H \leq X^{1/60}$.

\textbf{Case 2 ($Q > MN$):} In this case we can interchange the roles of $M,N$ with those of $R, Q$ in the claim~\eqref{eq:TypeIClaimSpec} by writing $\ell mn \equiv k \pmod{qr}$ first as $\ell m n = k + \ell' qr$, then using
\[
g_3\left(\frac{\ell mn}{X}\right) = g_3\left(\frac{\ell' q r}{X}\right) + O\left(\frac{H}{X}\right)
\]
and finally re-writing $\ell m n = k + \ell' qr$ as $\ell' qr = - k \pmod{mn}$, so that our claim becomes
\[
\begin{split}
&\sum_{0 < |k| \leq H} \Biggl| \sum_{\substack{q \\ r \sim R}} \gamma_r g_2\left(\frac{q}{Q}\right) \sum_{\substack{m \sim M \\ n \\  (mn, qr) \mid k}} \alpha_m g_1\left(\frac{n}{N}\right) \\ & \qquad \qquad \Biggl(\sum_{\substack{\ell' \\ \ell' qr = -k \pmod{mn}}} g_3\left(\frac{\ell' q r}{X}\right) -  \frac{\widehat{g_3}(0) X}{[qr, mn]}\Biggr)\Biggr| \ll X^{1-1/900}.
\end{split}
\]
Since $N \leq QR$, this follows from Case 1 with the roles of $R, Q$ and $M,N$ interchanged.
\end{proof}

\begin{proof}[Proof of Lemma~\ref{le:TypeI}]
Arguing as in the beginning of the proof of Lemma~\ref{le:TypeII} (i.e. writing $\delta = (mn, qr)$ and using Poisson summation (Lemma~\ref{le:Poisson}(ii))), we see that the left hand side of~\eqref{eq:TypeIClaim} is
\[
\begin{split}
&\ll \sum_{0 < \delta \leq H} \sum_{0 < |k| \leq H/\delta} \Biggl| \sum_{\substack{m \sim M, n \in \mathbb{N} \\ \delta \mid mn}} \alpha_m g_1\left(\frac{n}{N}\right) \sum_{\substack{r \sim R, q \in \mathbb{N} \\ \delta \mid qr \\ (qr/\delta, mn/\delta) = 1}} \gamma_r g_2\left(\frac{q}{Q}\right) \\
& \quad \quad \quad \quad \cdot \frac{X}{mnqr/\delta} \sum_{\substack{\ell \in \mathbb{Z} \\ \ell \neq 0}} \widehat{g}_3\left(\frac{\ell X}{mnqr/\delta}\right) e\left(\frac{k \ell \overline{mn/\delta}}{qr/\delta}\right)\Biggr|.
\end{split}
\]
Similarly to the proof of Lemma~\ref{le:TypeII} we can truncate the innermost sum to $0 < |\ell| \leq \frac{MNQR}{\delta X^{1-\varepsilon/20}}$. Still following proof of Lemma~\ref{le:TypeII}, we write $\mu = (m, \delta)$, and $\nu = \delta/\mu$, so that $m = \mu d$ for some $d\in \mathbb{N}$ with $(d, \nu) = 1$ and $n = \nu v$ for some $v \in \mathbb{N}$. This time we also write $\mu' = (r, \delta)$ and $\nu' = \delta/\mu'$ so that $r = \mu' s$ for some $s \in \mathbb{N}$ with $(s, \nu') = 1$. Since $\delta \mid qr$ we must have $\nu' \mid q$ and thus we can write $q = \nu' c$ for some $c \in \mathbb{N}$. Now $qr/\delta = c s$ and $mn/\delta = dv$ and so $(cs, dv) = 1$. With this notation we are, instead of $E_{II}$ in~\eqref{eq:typeIILHSmi}, led to
\begin{equation}
\label{eq:TypeIPh1}
\begin{split}
E_I &:= X \sum_{0 < \delta \leq H} \sum_{0 < |k| \leq H/\delta} c_{k, \delta} \sum_{\delta = \mu \nu} \sum_{\delta = \mu' \nu'} \sum_{\substack{s \sim R/\mu' \\ (s, \nu') = 1}} \frac{\gamma_{\mu' s}}{s} \sum_{\substack{c}} \frac{g_2\left(\frac{\nu' c}{Q}\right)}{c} \\
& \quad \cdot  \sum_{\substack{d \sim M/\mu \\ (d, cs\nu) = 1}} \frac{\alpha_{\mu d}}{\mu d} \sum_{\substack{v \\ (v, cs) = 1}} \frac{g_1\left(\frac{\nu v}{N}\right)}{\nu v} \sum_{0 < |\ell| \leq \frac{MNQR}{\delta X^{1-\varepsilon/20}}} \widehat{g}_3\left(\frac{\ell X}{\delta d v c s}\right) e\left(\frac{k \ell \overline{d v}}{cs}\right)
\end{split}
\end{equation}
for certain bounded coefficients $c_{k, \delta}$.

We write $n = k \ell$ and
\[
\widehat{g}_3\left(\frac{\ell X}{\delta d v c s}\right) = \frac{cv}{X} \int_{-\infty}^\infty g_3\left(\xi \frac{cv}{X}\right) e\left(\xi\frac{\ell}{\delta s d}\right) d\xi,
\]
and define
\[
b_{\delta, \mu, \mu', \xi}(d, n, s) :=  \mathbf{1}_{(s, \delta/\mu')} \mathbf{1}_{(d, \delta/\mu) = 1} M \frac{\alpha_{\mu d}}{\mu d} \cdot R \frac{\gamma_{\mu' s}}{\mu' s} \cdot \frac{1}{X^{\varepsilon/60}} \sum_{\substack{k \ell = n \\ 0 < |k| \leq H/\delta \\ 0 < |\ell| \leq \frac{MNQR}{\delta X^{1-\varepsilon/20}}}}  e\left(\xi \frac{\ell}{\delta ds}\right)
\] 
and
\[
g_{\nu, \nu', \xi}(x_1, x_2) := g_2(x_1) g_1(x_2) g_3\left(\xi \frac{x_1 x_2}{X} \frac{NQ}{\nu \nu'}\right).
\]
Then 
\[
\begin{split}
E_I &\ll \frac{X^{1+\varepsilon/60}}{MNQR} \sum_{0 < \delta \leq H} \delta \sum_{\substack{\delta = \mu \nu}} \sum_{\delta = \mu' \nu'} \max_{\xi \asymp \frac{X \nu \nu'}{NQ}}  \\
& \Biggl|\sum_{\substack{d \sim M/\mu \\ s \sim R/\mu' \\ (d, s) = 1}} \sum_{0 < |n| \leq \frac{H MN QR}{\delta^2 X^{1-\varepsilon/20}}} b_{\delta, \mu, \mu', \xi}(d, n, s) \sum_{\substack{c, v \\ (dv, cs) = 1}} g_{\nu, \nu', \xi}\left(\frac{c}{Q/\nu'}, \frac{v}{N/\nu}\right) e\left(\frac{n \overline{d v}}{cs}\right)\Biggr|,
\end{split}
\]
Note that in the support of the sum $b_{\delta, \mu, \mu', \xi}(d, n, s) \ll 1$ and $g_{\nu, \nu', \xi}(x_1, x_2)$ is smooth and compactly supported, and satisfies~\eqref{eq:g2dimderbound}. 

Now, for each $\delta, \mu, \mu'$, we apply Lemma~\ref{le:KloTypeI} with $C = Q/\nu', D = N/\nu, N = HMNQR/(\delta^2 X^{1-\varepsilon/20}), R = M/\mu = M\nu/\delta$ and $S = R/\mu' = \nu' R/\delta$, obtaining the bound 
\[
\begin{split}
E_I &\ll \frac{H^{1/2} X^{1/2+\varepsilon/20}}{(NQ)^{1/2}} \sum_{0 < \delta \leq H} \frac{1}{\delta} \sum_{\substack{\nu, \nu' \mid \delta}} \nu^{1/2}\nu'^{1/2} \Biggl[\frac{QR}{\delta}\left(\frac{\nu \nu' MR}{\delta^2} + \frac{HMNQR}{\delta^2 X^{1-\varepsilon/20}}\right)\left(\frac{Q}{\nu'} + \frac{MN}{\delta}\right) \\
&\qquad + \frac{Q^2 N R}{\nu' \nu \delta} \sqrt{\left(\frac{\nu \nu' MR}{\delta^2} + \frac{HMNQR}{\delta^2 X^{1-\varepsilon/20}}\right) \frac{\nu M}{\delta}} + \frac{N^2}{\nu^2} \frac{HMNQR}{\delta^2 X^{1-\varepsilon/20}} \frac{\nu M}{\nu' R}\Biggr]^{1/2}.
\end{split}
\]
The sums over $\nu, \nu'$ and $\delta$ contribute $O(X^{\varepsilon/100})$ so that
\[
\begin{split}
E_I &\ll \frac{H^{1/2} X^{1/2+\varepsilon}}{(NQ)^{1/2}} \Biggl[QR\left(MR + \frac{HMNQR}{X}\right)\left(Q + MN\right) \\
&\qquad + Q^2 N R \sqrt{\left(MR + \frac{HMNQR}{X}\right) M} +\frac{HM^2N^3Q}{X}\Biggr]^{1/2}.
\end{split}
\] 
By assumption, $Q+MN \leq 2MN$ and the claim follows.
\end{proof}

\section{Mean squares of $E^\pm(x, y, z)$}
\label{se:E(x)}
The aim of this section is to prove~\eqref{eq:E(x)claim}. We write $a_d^- = \alpha_d^-$ with $\alpha_d$ as in~\eqref{eq:alpk-def} and 
\[
a_d^+ = \sum_{a \in \mathcal{I}} \sum_{d = pe} \sigma\left(\frac{p}{\sqrt{2}^a} \right) \left(1-\frac{\log p}{\log y}\right) \alpha_{e, a}^+.
\]
with $\mathcal{I}, \sigma(x),$ and $\alpha_{e, a}^+$ as in~\eqref{eq:Idef}, \eqref{eq:sigmadef}, and~\eqref{eq:alphaka+def}. Notice that $a_d^\pm$ are supported on $d \leq DE = X^{5/9+1/1000}$. With these definitions,
\[
E^\pm(x, y, z) = \sum_{\substack{d \leq DE, m \in \mathbb{N} \\ x-h \log X < dm \leq x}} a_d^\pm - h\log X \sum_{d \leq DE} \frac{a_d^\pm}{d}.
\]

Let $g \colon \mathbb{R} \to [0, 1]$ be a smooth function supported on $[1/4, 2]$ such that $g(x) = 1$ for $x \in [1/2, 1]$ and~\eqref{eq:gderivbound} holds. Then Proposition~\ref{prop:MeanSquare} gives
\[
\int_{X/2}^X |E^\pm(x, y, z)|^2 \ll |S_1^\pm| + |S_2^\pm| + |S_3^\pm| + h^3 (\log X)^6
\]
with $S_1^\pm, S_2^\pm, S_3^\pm$ as in Proposition~\ref{prop:MeanSquare} with $a_d = a_d^\pm$ and $H = h\log X$. In the next three subsections we show that $S_j^\pm \ll hX$ for $j = 1, 2, 3$. 
\subsection{Showing that $S_1^\pm \ll hX$}
Noticing that $\gamma_{d, h\log X} \ll d h\log X$, it suffices to show that
\begin{align}
\label{eq:alm-Claim}
&\sum_{d \leq DE} d \left( \sum_{\substack{m \leq DE \\ m \equiv 0 \pmod{d}}} \frac{\alpha^-_{m}}{m}\right)^2 \ll \frac{1}{\log X}
\end{align}
and
\begin{align}
\label{eq:alm+Claim}
&\sum_{d \leq DE} d \left(\sum_{a \in \mathcal{I}} \sum_p \sigma\left(\frac{p}{\sqrt{2}^a} \right) \left(1-\frac{\log p}{\log y}\right) \sum_{\substack{m \leq D_a E \\ mp \equiv 0 \pmod{d}}}  \frac{\alpha_{m, a}^+}{mp}\right)^2 \ll \frac{1}{\log X}.
\end{align}

Splitting the sum over $p$ in~\eqref{eq:alm+Claim} according to whether $p \mid d$ or not and applying the inequality $(x+y)^2 \leq 2x^2+2y^2$, we see that the left hand side of~\eqref{eq:alm+Claim} is
\[
\begin{split}
&\ll \sum_{d \leq DE} d \left(\sum_{a \in \mathcal{I}} \sum_{p \mid d} \sigma\left(\frac{p}{\sqrt{2}^a} \right) \left(1-\frac{\log p}{\log y}\right) \sum_{\substack{m \leq D_a E \\ m \equiv 0 \pmod{d/p}}}  \frac{\alpha_{m, a}^+}{mp}\right)^2 \\
& + \sum_{d \leq DE} d \left(\sum_{a \in \mathcal{I}} \sum_{p \nmid d} \sigma\left(\frac{p}{\sqrt{2}^a} \right) \left(1-\frac{\log p}{\log y}\right) \sum_{\substack{m \leq D_a E \\ m \equiv 0 \pmod{d}}}  \frac{\alpha_{m, a}^+}{mp}\right)^2 =: S^+_{1, 1} + S^+_{1, 2},
\end{split}
\]
say. 

Let us first consider $S^+_{1, 1}$. Applying the Cauchy-Schwarz inequality, we obtain
\[
S^+_{1, 1} \ll \sum_{d \leq DE} d \left(\sum_{a \in \mathcal{I}} \sum_{p \mid d} \sigma\left(\frac{p}{\sqrt{2}^a} \right) \left(\sum_{\substack{m \leq D_a E \\ m \equiv 0 \pmod{d/p}}}  \frac{\alpha_{m, a}^+}{mp}\right)^2 \cdot \sum_{a \in I} \sum_{p \mid d} \sigma\left(\frac{p}{\sqrt{2}^a}\right) \right).
\]
Recalling the support of $\sigma$ we see that 
\begin{equation}
\label{eq:sumoverap}
\sum_{a \in I} \sum_{p \mid d} \sigma\left(\frac{p}{\sqrt{2}^a}\right) \ll \sum_{\substack{p \mid d \\ z/4 \leq p \leq 2y}} 1 \ll 1
\end{equation}
and
\begin{equation}
\label{eq:apsum}
\sum_{a \in \mathcal{I}} \sum_{p} \sigma\left(\frac{p}{\sqrt{2}^a}\right) \frac{1}{p} \ll \sum_{z/4 \leq p \leq 2y} \frac{1}{p} \ll 1.
\end{equation}

Using~\eqref{eq:sumoverap} and rearranging, we see that
\[
\begin{split}
S^+_{1, 1} &\ll \sum_{a \in \mathcal{I}} \sum_{p} \sigma\left(\frac{p}{\sqrt{2}^a}\right)  \frac{1}{p^2} \sum_{\substack{d \leq D E \\ p \mid d}} d \left(\sum_{\substack{m \leq D_a E \\ m \equiv 0 \pmod{d/p}}}  \frac{\alpha_{m, a}^+}{m}\right)^2.
\end{split}
\]
Substituting $d = p d'$ and applying~\eqref{eq:apsum}, we obtain
\begin{equation}
\label{eq:S+11bound}
\begin{split}
S^+_{1, 1} &\ll \sum_{a \in \mathcal{I}} \sum_{p} \sigma\left(\frac{p}{\sqrt{2}^a}\right) \frac{1}{p}  \sum_{\substack{d' \leq D_a E}} d' \left(\sum_{\substack{m \leq D_a E \\ m \equiv 0 \pmod{d'}}}  \frac{\alpha_{m, a}^+}{m}\right)^2 \\
&\ll \max_{a \in \mathcal{I}} \sum_{\substack{d' \leq D_a E}} d' \left(\sum_{\substack{m \leq D_a E \\ m \equiv 0 \pmod{d'}}}  \frac{\alpha_{m, a}^+}{m}\right)^2.
\end{split}
\end{equation}

Let us now turn to $S^+_{1, 2}$. Applying the Cauchy-Schwarz inequality, we see that
\[
\begin{split}
S^+_{1, 2} &\ll \sum_{d \leq DE} d \left(\sum_{a \in \mathcal{I}} \sum_{p \nmid d} \sigma\left(\frac{p}{\sqrt{2}^a} \right) \frac{1}{p} \sum_{\substack{m \leq D_a E \\ m \equiv 0 \pmod{d}}}  \frac{\alpha_{m, a}^+}{m}\right)^2\\
& \ll \sum_{d \leq DE} d \left(\sum_{a \in \mathcal{I}} \sum_{p} \sigma\left(\frac{p}{\sqrt{2}^a} \right) \frac{1}{p} \left(\sum_{\substack{m \leq D_a E \\ m \equiv 0 \pmod{d}}}  \frac{\alpha_{m, a}^+}{m}\right)^2 \cdot \sum_{a \in I} \sum_{p} \sigma\left(\frac{p}{\sqrt{2}^a}\right) \frac{1}{p} \right).
\end{split}
\]

Using~\eqref{eq:apsum}, rearranging, and using~\eqref{eq:apsum} again, we see that
\[
\begin{split}
S^+_{1, 2} &\ll  \sum_{a \in \mathcal{I}} \sum_{p} \sigma\left(\frac{p}{\sqrt{2}^a} \right) \frac{1}{p} \sum_{d \leq DE} d \left(\sum_{\substack{m \leq D_a E \\ m \equiv 0 \pmod{d}}}  \frac{\alpha_{m, a}^+}{m}\right)^2 \\
& \ll \max_{a \in \mathcal{I}} \sum_{d \leq D_a E} d \left(\sum_{\substack{m \leq D_a E \\ m \equiv 0 \pmod{d}}} \frac{\alpha^+_{m, a}}{m}\right)^2.
\end{split}
\]
Combining this with~\eqref{eq:S+11bound} we see that~\eqref{eq:alm+Claim} reduces to showing that
\begin{equation}
\label{eq:alm+claimreduced}
\max_{a \in \mathcal{I}} \sum_{d \leq D_a E} d \left(\sum_{\substack{m \leq D_a E \\ m \equiv 0 \pmod{d}}} \frac{\alpha^+_{m, a}}{m}\right)^2 \ll \frac{1}{\log X},
\end{equation}
a claim very similar to~\eqref{eq:alm-Claim}. A similar more general claim will be encountered also in \cite{MRPartII}\footnote{In the first arXiv version of~\cite{MRPartII} we used different sieve weights and utilized an incorrect version of Lemma~\ref{le:OperaCorrected} below (see Remark~\ref{rem:OperaCorr} below), so one should look at a more recent version (which is not yet on arXiv)}. 

Let us consider~\eqref{eq:alm+claimreduced}. Note first that the definition of $\alpha_{m, a}^+$ in~\eqref{eq:alphaka+def} implies that, for any $a \in \mathcal{I}$,
\[
\begin{split}
&\sum_{d \leq D_a E} d \left(\sum_{\substack{m \leq D_a E \\ m \equiv 0 \pmod{d}}} \frac{\alpha^+_{m, a}}{m}\right)^2 \\
&= \sum_{d \leq D_a E} d \left( \sum_{\substack{m \leq D_a E \\ m \equiv 0 \pmod{d}}} \frac{\mathbf{1}_{m \mid P(z)} \lambda_{(m, P(w, z)), a}^+ \rho_{(m, P(w))}^+}{m}\right)^2 \\
&= \sum_{\substack{d_1 \leq E \\ d_1 \mid P(w)}} d_1  \left( \sum_{\substack{m_1 \leq E, m_1 \mid P(w) \\ m_1 \equiv 0 \pmod{d_1}}} \frac{\rho_{m_1}^+}{m_1}\right)^2 \sum_{\substack{d_2 \leq D_a \\  d_2 \mid P(w, z)}} d_2 \left( \sum_{\substack{m_2 \leq D_a, m_2 \mid P(w, z) \\ m_2 \equiv 0 \pmod{d_2}}} \frac{\lambda_{m_2, a}^+}{m_2} \right)^2
\end{split}
\]
Here the sum over $d_2$ is
\[
\ll \sum_{\substack{d_2 \leq D_a \\  d_2 \mid P(w, z)}} d_2  \left(\frac{1}{d_2} \sum_{\substack{m_2 \leq D_a/d_2 \\ m_2 \mid P(w, z)}} \frac{1}{m_2}\right)^2 \ll \left(\sum_{\substack{r \leq D_a \\  r \mid P(w, z)}} \frac{1}{r}\right)^3 \ll \prod_{w \leq p < z} \left(1 + \frac{1}{p}\right)^3 \ll 1.
\]

We can argue similarly with~\eqref{eq:alm-Claim} and thus, noting that the support of $\rho_m^\pm$ is contained in $[1, E]$, it suffices to show that
\begin{equation}
\label{eq:rhopmMSclaim}
\sum_{\substack{d \mid P(w)}} d  \left( \sum_{\substack{m \mid P(w) \\ m \equiv 0 \pmod{d}}} \frac{\rho_{m}^\pm}{m}\right)^2 \ll \frac{1}{\log X}.
\end{equation}
A similar claim was shown in~\cite{Fried2} and also in~\cite[Lemma 6.18]{Opera} though there is a slight mistake in the latter proof. For completeness, we provide a detailed proof here.

The starting point for proving~\eqref{eq:rhopmMSclaim} is the following lemma.
\begin{lemma}
\label{le:OperaCorrected}
Let $w \geq 1$, let $\lambda_d$ be complex numbers, and define $\theta_n := \sum_{d \mid n} \lambda_d$. Write
\[
W := \sum_{\substack{d \mid P(w)}} d  \left( \sum_{\substack{m \mid P(w) \\ m \equiv 0 \pmod{d}}} \frac{\lambda_{m}}{m}\right)^2.
\]
Then
\begin{equation}
\label{eq:OperaCorrectedClaim}
W \ll \prod_{p < w} \left(1-\frac{1}{p}\right) \sum_{b \mid P(w)} \frac{b}{\varphi(b)^2} \sum_{\substack{e_1, e_2 \mid P(w) \\ (e_1, e_2) = 1 \\ (e_1 e_2, b) = 1}} \frac{|\theta_{be_1} \theta_{be_2}|}{e_1 e_2 \varphi(e_1e_2)}.
\end{equation}
\end{lemma}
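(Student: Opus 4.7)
The plan is to Möbius-invert $\theta_n=\sum_{d\mid n}\lambda_d$ to $\lambda_m=\sum_{k\mid m}\mu(m/k)\theta_k$, substitute into the LHS, and reorganize everything as an Euler product over primes $p<w$, then match the resulting product against the RHS.

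First I would evaluate the inner quantity $L(d):=\sum_{m\mid P(w),\,d\mid m}\lambda_m/m$. Since every integer involved is a squarefree divisor of $P(w)$, I write $m=dm'$ with $m'\mid P(w)/d$ (so automatically $(d,m')=1$), and observe that every $k\mid dm'$ factors uniquely as $k=\alpha\beta$ with $\alpha\mid d$ and $\beta\mid m'$, with $\mu(dm'/k)=\mu(d/\alpha)\mu(m'/\beta)$ by coprime multiplicativity. After substituting and swapping the sum over $m'$ with the one over $\beta$ (writing $m'=\beta m''$, $m''\mid P(w)/(d\beta)$), the innermost sum is
\[
\sum_{m''\mid P(w)/(d\beta)}\frac{\mu(m'')}{m''}=V(w)\cdot\frac{d\beta}{\varphi(d)\varphi(\beta)},
\]
using $(d,\beta)=1$ to split $\varphi(d\beta)$. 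Collecting the prefactors gives
\[
L(d)=\frac{V(w)}{\varphi(d)}\sum_{\alpha\mid d}\mu(d/\alpha)\sum_{\beta\mid P(w)/d}\frac{\theta_{\alpha\beta}}{\varphi(\beta)}.
\]

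Next I would square $L(d)$, multiply by $d$, and sum over $d\mid P(w)$. Expanding the square yields a quadratic form in the $\theta$'s indexed by quadruples $(\alpha_1,\alpha_2,\beta_1,\beta_2)$. I reparameterize to $(n_1,n_2)$ via $n_i=\alpha_i\beta_i$; by squarefreeness the decomposition is forced to be $\alpha_i=(n_i,d)$, $\beta_i=n_i/(n_i,d)$, so the coefficient of $\theta_{n_1}\theta_{n_2}$ becomes
\[
V(w)^2\sum_{d\mid P(w)}\frac{d}{\varphi(d)^2}\,\mu\!\bigl(d/(n_1,d)\bigr)\mu\!\bigl(d/(n_2,d)\bigr)\frac{1}{\varphi(n_1/(n_1,d))\varphi(n_2/(n_2,d))}.
\]
Every factor depends multiplicatively on the prime decomposition of $d$, so the sum over $d$ is an Euler product over $p<w$.

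Finally I would adopt the natural parameterization $b=(n_1,n_2)$, $e_1=n_1/b$, $e_2=n_2/b$ (so that $(e_1,e_2)=1$ and $(e_1e_2,b)=1$ automatically). For each prime $p<w$ there are exactly four cases -- $p\mid b$, $p\mid e_1$, $p\mid e_2$, or $p\nmid n_1n_2$ -- and I compute the local Euler factor in each case by summing the two terms $\epsilon\in\{0,1\}$ corresponding to $p\nmid d$ or $p\mid d$. Assembling the local factors and separating out the prefactor $\prod_{p<w}(1-1/p)=V(w)$ reorganizes the quadratic form into the grouping on the RHS, with the $b$-dependent product contributing $b/\varphi(b)^2$ and the $e_1e_2$-dependent product contributing $1/(e_1e_2\varphi(e_1e_2))$.

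The main obstacle is the bookkeeping in the prime-by-prime reorganization: one must track Möbius signs through the $\alpha,\beta$ decomposition, apply the Möbius--Euler evaluation to collapse the $m''$-sum cleanly, and carefully verify that the four local Euler factors combine to exactly the RHS weights (this is presumably where the ``slight mistake'' in the corresponding argument of Opera occurs, and where extra care is needed here).
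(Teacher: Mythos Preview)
Your approach mirrors the paper's exactly: M\"obius-invert to $\lambda_m=\sum_{k\mid m}\mu(m/k)\theta_k$, evaluate the inner sum $L(d)$ (your formula agrees with the paper's upon setting $b=\alpha\beta$), square and sum over $d$, then compute the resulting $d$-sum as an Euler product after grouping by $b=(n_1,n_2)$, $e_i=n_i/b$.

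The gap is in your final step. The local Euler factors do \emph{not} assemble to the RHS weights exactly: at primes $p\mid e_1e_2$ the $d$-sum carries a factor $1-\tfrac{p}{p-1}=-\tfrac{1}{p-1}$, producing a sign $(-1)^{\omega(e_1e_2)}$ absent from the RHS, and at primes $p\nmid be_1e_2$ the factor is $\tfrac{p^2-p+1}{(p-1)^2}$ rather than the $\tfrac{p}{p-1}$ that would be needed to cancel one copy of $V(w)$. Concretely, for $w=3$, $\lambda_1=1$, $\lambda_2=-1$ (so $\theta_1=1$, $\theta_2=0$) one computes LHS $=3/4$ but RHS $=1/2$; the stated equality is simply false.

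The paper's own proof does not establish equality either: after writing the $d$-sum exactly as the Euler product above, it bounds $|\text{$d$-sum}|\ll \tfrac{b}{e_1e_2}\prod_{p<w}(1+1/p)$ and concludes LHS $\ll$ RHS (implicitly using $\theta_n\geq 0$, which holds in the application since $\sum_{e\mid n}\rho_e^+\geq 0$). So the ``$=$'' in the lemma statement should be read as ``$\ll$'', and your plan goes through once ``matching exactly'' is replaced by ``bounding in absolute value''.
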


\begin{remark}
\label{rem:OperaCorr}
In~\cite[Proof of Lemma 6.18]{Opera} it is claimed that 
\[
W = \prod_{p < w} \left(1-\frac{1}{p}+\frac{1}{p^2}\right) \sum_{b \mid P(w)} \frac{\theta_{b}^2}{b}.
\]
However there is a mistake in the proof on the second line of the second display of \cite[page 76]{Opera}, where the condition $(b, k) = 1$ is missing. Taking this condition into account leads to non-diagonal contribution as in our lemma though in applications the non-diagonal contribution is easy to handle. Our proof actually shows the exact formula
\begin{equation}
\label{eq:OperaCorrectedClaimPrecise}
\begin{split}
W &= \prod_{p < w} \left(1-\frac{1}{p}+\frac{1}{p^2}\right) \sum_{b \mid P(w)} \frac{b}{\varphi(b)^2} \prod_{p \mid b} \left(1-\frac{2p-1}{p^3-p^2+p}\right) \\
& \qquad \qquad \cdot \sum_{\substack{e_1, e_2 \mid P(w) \\ (e_1, e_2) = 1 \\ (e_1 e_2, b) = 1}} \frac{\theta_{be_1} \theta_{be_2} (-1)^{\omega(e_1 e_2)}}{e_1 e_2 \varphi(e_1e_2)} \prod_{p \mid e_1 e_2} \left(1-\frac{1}{p^2-p+1}\right).
\end{split}
\end{equation}
\end{remark}
\begin{proof}[Proof of Lemma~\ref{le:OperaCorrected}]
We follow the argument in \cite[Proof of Lemma 6.18]{Opera}, correcting the issue mentioned in Remark~\ref{rem:OperaCorr}. Notice that, by M\"obius inversion, $\lambda_m = \sum_{m = ab} \mu(a) \theta_b$. Hence, for $d \mid P(w)$,
\[
\begin{split}
\sum_{\substack{ m \mid P(w) \\ m \equiv 0 \pmod{d}}} \frac{\lambda_m}{m} &= \sum_{\substack{ab \mid P(w) \\ d \mid ab}} \frac{\mu(a) \theta_b}{ab} = \sum_{\substack{b \mid P(w)}} \frac{\theta_b}{b} \sum_{\substack{a \mid P(w) \\ (a, b) = 1 \\ \frac{d}{(b, d)} \mid a}} \frac{\mu(a)}{a} \\
&= \sum_{\substack{b \mid P(w)}} \frac{\theta_b}{b} \frac{\mu(d/(b, d))}{d/(b, d)}  \sum_{\substack{a \mid P(w) \\ (a, bd/(b, d)) = 1}} \frac{\mu(a)}{a} \\
&=  \sum_{\substack{b \mid P(w)}} \frac{\theta_b}{b} \frac{\mu(d/(b, d))}{d/(b, d)} \prod_{p < w} \left(1-\frac{1}{p}\right) \prod_{p \mid \frac{bd}{(b, d)}} \left(1-\frac{1}{p}\right)^{-1} \\
&= \prod_{p < w} \left(1-\frac{1}{p}\right) \sum_{\substack{b \mid P(w)}} \frac{\theta_b}{b} \frac{\mu(d)\mu((b, d))}{d/(b, d)} \frac{bd/(b,d)}{\varphi(bd/(b, d))}\\
&= \frac{\mu(d)}{\varphi(d)} \prod_{p < w} \left(1-\frac{1}{p}\right) \sum_{\substack{b \mid P(w)}} \theta_b \mu((b, d))\frac{\varphi((b, d))}{\varphi(b)}.
\end{split}
\]
Consequently
\[
W = \prod_{p < w} \left(1-\frac{1}{p}\right)^2 \sum_{b_1, b_2 \mid P(w)} \frac{\theta_{b_1} \theta_{b_2}}{\varphi(b_1)\varphi(b_2)} \sum_{d \mid P(w)} d \frac{\mu((b_1, d))\mu((b_2, d))\varphi((b_1, d))\varphi((b_2, d))}{\varphi(d)^2}.
\]
Writing $b_j = b e_j$ with $b = (b_1, b_2)$, we get
\[
W = \prod_{p < w} \left(1-\frac{1}{p}\right)^2 \sum_{\substack{b, e_1, e_2 \mid P(w) \\ (e_1, e_2) = (b, e_1 e_2) = 1}} \frac{\theta_{be_1} \theta_{be_2}}{\varphi(b e_1)\varphi(b e_2)} \sum_{d \mid P(w)} d \frac{\mu((e_1e_2, d))\varphi((be_1, d))\varphi((be_2, d))}{\varphi(d)^2}.
\]
The summand in the $d$-sum is multiplicative, so, looking at the Euler product factors, the $d$-sum equals
\[
\begin{split}
& \prod_{\substack{p < w \\ p \nmid b e_1 e_2}} \left(1+\frac{p}{(p-1)^2}\right) \prod_{\substack{p \mid b}} \left(1 + p \right)  \prod_{\substack{p \mid e_1 e_2}} \left(1 - \frac{p}{p-1}\right) \\
&= \prod_{p < w} \left(1+\frac{p}{(p-1)^2}\right) \prod_{\substack{p \mid b}} \left(1 + p \right)\left(1+\frac{p}{(p-1)^2}\right)^{-1}  \prod_{\substack{p \mid e_1 e_2}} \left(\frac{-1}{p-1}\right) \left(1+\frac{p}{(p-1)^2}\right)^{-1} \\
&= (-1)^{\omega(e_1 e_2)} \prod_{p < w} \left(1+\frac{p}{(p-1)^2}\right) \prod_{\substack{p \mid b}} \left(\frac{(p+1)(p-1)^2}{p^2-p+1}\right)  \prod_{\substack{p \mid e_1 e_2}} \left(\frac{p-1}{p^2-p+1}\right).
\end{split}
\]
Here 
\[
\frac{(p+1)(p-1)^2}{p^2-p+1} = p \frac{p^3-p^2-p+1}{p^3-p^2+p} = p\left(1-\frac{2p-1}{p^3-p^2+p}\right) 
\]
and
\[
\frac{p-1}{p^2-p+1} = \frac{1}{p} \cdot \frac{p^2-p}{p^2-p+1} = \frac{1}{p} \cdot \left(1 - \frac{1}{p^2-p+1}\right),
\]
so we get
\[
\begin{split}
W &= \prod_{p < w} \left(1-\frac{1}{p}\right)^2 \left(1+\frac{p}{(p-1)^2}\right) \sum_{b \mid P(w)} \frac{b}{\varphi(b)^2} \prod_{p \mid b} \left(1-\frac{2p-1}{p^3-p^2+p}\right) \\
&\cdot \sum_{\substack{e_1, e_2 \mid P(w) \\ (e_1, e_2) = (b, e_1 e_2) = 1}} \frac{\theta_{be_1} \theta_{be_2} (-1)^{\omega(e_1 e_2)}}{e_1 e_2\varphi(e_1 e_2)} \prod_{p \mid e_1 e_2} \left(1 - \frac{1}{p^2-p+1}\right).
\end{split}
\]
Here
\[
\left(1-\frac{1}{p}\right)^2 \left(1+\frac{p}{(p-1)^2}\right) = \frac{(p-1)^2}{p^2} \left(1+\frac{p}{(p-1)^2}\right) = \frac{(p-1)^2}{p^2} + \frac{1}{p} = 1-\frac{1}{p} + \frac{1}{p^2},
\]
so \eqref{eq:OperaCorrectedClaimPrecise} follows which implies also~\eqref{eq:OperaCorrectedClaim}.
\end{proof}

Let us now return to showing ~\eqref{eq:rhopmMSclaim}. For $r \geq 0$, write 
\[
V_r(n, w) := \sum_{\substack{n = p_1 \dotsm p_r d \\ p_r < p_{r-1} < \dotsc < p_1 < w \\ p \mid d \implies p \geq p_r \\ p_1 p_2 \dotsm p_r p_r^\beta \geq E \\ p_1 \dotsm p_h p_h^\beta < E \text{ for all odd $h < r$}}} 1
\]
By the definition of $\rho_e^\pm$, we have (see e.g. \cite[(6.29--6.30) with $g(p) = \mathbf{1}_{p \mid (n, P(w))}$]{IwaKov}), for any $n \in \mathbb{N}$,
\[
\begin{split}
\theta_n^+ &:= \sum_{e \mid n} \rho_e^+ = \sum_{e \mid (n, P(w))} \rho_e^+ = \mathbf{1}_{(n, P(w)) = 1} + \sum_{r \text{ odd}} V_r(n, w), \\
\theta_n^- &:= \sum_{e \mid n} \rho_e^- = \sum_{e \mid (n, P(w))} \rho_e^- = \mathbf{1}_{(n, P(w)) = 1} - \sum_{r \text{ even}} V_r(n, w).
\end{split}
\]

Recall that $\frac{\log E}{\log w} = \frac{1}{1000 \delta} \geq \beta = 30$ when $\delta$ is sufficiently small. One can easily show that, for every $r$, in the sum defining $V_r(n, w)$ one has 
\begin{equation}
\label{eq:wrdef}
p_r \geq w_r := w^{\left(\frac{\beta-1}{\beta}\right)^{r}}
\end{equation} 
(see e.g.~\cite[Section 6.3]{IwaKov}). In the support of $V_r(n, w)$ we have $\omega(n) \geq r$ so that $2^{\omega(n)-r} \geq 1$. Hence, writing $k = p_1 \dotsm p_r$, we have
\[
V_r(n, w) \leq 2^{\omega(n)-r} \mathbf{1}_{(n, P(w_r)) = 1} \sum_{\substack{n = kd \\ p \mid n \implies p \geq w_r}} 1 \leq 2^{-r} \mathbf{1}_{(n, P(w_r)) = 1} d(n)^2.
\]
Consequently
\begin{equation}
\label{eq:t(n)upperbound}
\left|\theta_n^+\right| = \left|\sum_{e \mid n} \rho_e^\pm\right| \leq \sum_{r \geq 0} 2^{-r} \mathbf{1}_{(n, P(w_r)) = 1} d(n)^2 =: \theta'_n,
\end{equation}
say. Clearly $|\theta_{b e_j}^\pm| \leq |\theta'_{b e_j}| \ll e_j^\varepsilon \theta_b'$. Plugging this into Lemma~\ref{le:OperaCorrected} and noticing that the sums over $e_1$ and $e_2$ are bounded  we obtain
\begin{equation}
\label{eq:dsumboundb}
\sum_{d \mid P(w)} d \left(\sum_{\substack{m \mid P(w) \\ m \equiv 0 \pmod{d}}} \frac{\rho_m^\pm}{m}\right)^2 \ll \prod_{p < w} \left(1-\frac{1}{p}\right) \sum_{b \mid P(w)} \frac{b}{\varphi(b)^2} \theta_b'^2 \ll \frac{1}{\log X} \sum_{b \mid P(w)} \frac{b}{\varphi(b)^2}\theta_b'^2.
\end{equation}
By the definition of $\theta_b'$ and the Cauchy-Schwarz inequality, 
\[
\theta_b'^2 \ll \sum_{r \geq 0} 2^{-r} \cdot \sum_{r \geq 0} 2^{-r} \mathbf{1}_{(b, P(w_r)) = 1} d(b)^4 \ll \sum_{r \geq 0} 2^{-r} \mathbf{1}_{(b, P(w_r)) = 1} d(b)^4.
\]
Hence
\[
\begin{split}
\sum_{b \mid P(w)} \frac{b}{\varphi(b)^2}\theta_b'^2 &\ll \sum_{r \geq 0} 2^{-r} \sum_{b \mid P(w)} \frac{b}{\varphi(b)^2} \mathbf{1}_{(b, P(w_r)) = 1} d(b)^4 \ll \sum_{r \geq 0} 2^{-r} \prod_{w_r \leq p < w} \left(1+\frac{16 p}{(p-1)^2}\right) \\
& \ll \sum_{r \geq 0} 2^{-r } \left(\frac{\log X}{\log w_r}\right)^{16} \ll \sum_{r \geq 0} 2^{-r } \left(\frac{\beta}{\beta-1}\right)^{16r} \ll 1
\end{split}
\]
since we chose $\beta = 30$ and $(30/29)^{16} < 2$. Now~\eqref{eq:rhopmMSclaim} follows from combining this with~\eqref{eq:dsumboundb}.

\subsection{Showing that $S_2^\pm \ll hX$} It suffices to establish that, for some small $\varepsilon > 0$ and any bounded $c_k$,
\begin{equation}
\label{eq:conv-Claim}
\sum_{0 < |k| \leq h\log X} c_k \sum_{\substack{d_1, d_2 \leq DE \\ (d_1, d_2) \mid k}} \alpha^-_{d_1} \alpha^-_{d_2} \left(\sum_{\substack{m_1, m_2 \\ d_1 m_1 = d_2 m_2 + k}} g\left(\frac{d_1 m_1}{X}\right) - \frac{\widehat{g}(0) X}{[d_1, d_2]}\right) \ll X^{1-\varepsilon/10}
\end{equation}
and
\begin{equation}
\label{eq:conv+Claim}
\begin{split}
&\sum_{0 < |k| \leq h\log X} c_k \sum_{a_1, a_2 \in \mathcal{I}} \sum_{p_1, p_2} \sigma\left(\frac{p_1}{\sqrt{2}^{a_1}} \right) \sigma\left(\frac{p_2}{\sqrt{2}^{a_2}} \right) \left(1-\frac{\log p_1}{\log y}\right) \left(1-\frac{\log p_2}{\log y}\right) \\
& \qquad \sum_{\substack{d_j \leq D_{a_j}E \\ (p_1 d_1, p_2 d_2) \mid k}} \alpha^+_{d_1, a_1} \alpha^+_{d_2, a_2} \left(\sum_{\substack{m_1, m_2 \\ d_1 p_1 m_1 = d_2 p_2 m_2+ k}} g\left(\frac{d_1 p_1 m_1}{X}\right) -  \frac{\widehat{g}(0) X}{[d_1 p_1, d_2p_2]}\right) \ll X^{1-\varepsilon/10}.
\end{split}
\end{equation}
These will follow from Lemmas~\ref{le:TypeIISpec} and~\ref{le:TypeISpec}.

Let us first consider~\eqref{eq:conv+Claim} which is more involved. It suffices to show that, for any $P_1, P_2 \in (z/4, 2y]$, any $D_i \leq DE/P_i$, and any bounded $\alpha_d, \beta_d$, one has
\begin{equation}
\label{eq:S2+claim}
\begin{split}
&\sum_{0 < |k| \leq h \log X}c_k \sum_{\substack{n_1, n_2}} \Lambda(n_1) \Lambda(n_2) h_1\left(\frac{n_1}{P_1}\right) h_2\left(\frac{n_2}{P_2}\right)\sum_{\substack{d_1 \sim D_1 \\ d_2 \sim D_2 \\ (d_1 n_1, d_2 n_2) \mid k}} \alpha_{d_1} \beta_{d_2} \\ & \qquad \qquad \left(\sum_{\substack{m_1, m_2 \\ d_1 m_1 n_1 = d_2 m_2 n_2+ k}} g\left(\frac{d_1 n_1 m_1}{X}\right)  -  \frac{\widehat{g}(0) X}{[d_1 n_1, d_2 n_2]}\right) \ll X^{1-\varepsilon/5}
\end{split}
\end{equation}
where 
\[
h_j(x) := \sigma(x) \left(1-\frac{\log (P_j x)}{\log y}\right) \frac{1}{\log (P_j x)} 
\]
are smooth, supported on $[1, 2]$ and satisfy
\begin{equation}
\label{eq:hjderbound}
\frac{d^k}{dx^k} h_j(x) \ll_k 1 \quad \text{for every $k \geq 0$.}
\end{equation}
In~\eqref{eq:S2+claim} we can write the condition $d_1 m_1 n_1 = d_2 m_2 n_2 + k$ as $d_1 m_1 n_1 \equiv k \pmod{d_2 n_2}$. Notice also that 
\begin{equation}
\label{eq:DjPjbound}
D_j P_j \leq DE \leq X^{14/25}
\end{equation}
We split into three cases according to the sizes of $P_i$.

\textbf{Case 1 ($P_1 \leq X^{21/50}$):} In this case we shall apply Lemma~\ref{le:TypeIISpec} with 
\[
N = \min\{D_1, P_1\}, \quad M = \max\{D_1, P_1\}, \quad \text{and} \quad Q \asymp D_2 P_2.
\]
We need to check that these choices satisfy~\eqref{eq:typeIIcondSpec}. By~\eqref{eq:DjPjbound} we have $\max\{MN, Q\} \ll X^{14/25}$, and by assumption $P_1 \leq X^{21/50}$. Hence it suffices to show that $D_1 \leq X^{21/50}$. But since $P_1 \geq z/4 = X^{5/36}/4$, we always have 
\[
D_1 \leq DE/P_1 \leq 4X^{5/9+1/1000-5/36} \leq X^{21/50}.
\]
Hence~\eqref{eq:S2+claim} follows from Lemma~\ref{le:TypeIISpec}; the choices of $\alpha_m$, $\beta_n$ are obvious and we can take
\[
\gamma_q = \frac{1}{X^{\varepsilon/100}} \sum_{q = d_2 n_2} \Lambda(n_2) h_2\left(\frac{n_2}{P_2}\right) \beta_{d_2}.
\]

\textbf{Case 2 ($P_2 \leq X^{21/50}$):}
Noting that 
\begin{equation}
\label{eq:swap}
g\left(\frac{d_1 n_1 m_1}{X}\right) = g\left(\frac{d_2 n_2 m_2}{X}\right) + O\left(\frac{h\log X}{X}\right)
\end{equation}
and that the summation condition $d_1 m_1 n_1 = d_2 m_2 n_2 + k$ can be written also as $d_2 m_2 n_2 \equiv -k \pmod{d_1 n_1}$, we obtain the claim similarly as in case $P_1 \leq X^{21/50}$, applying Lemma~\ref{le:TypeIISpec} with
\[
N = \min\{D_2, P_2\}, \quad M = \max\{D_2, P_2\}, \quad and \quad Q \asymp D_1 P_1.
\]

\textbf{Case 3 ($P_1, P_2 > X^{21/50}$):} Now $P_1, P_2 \in (X^{21/50}, 2y]$ and $D_i \leq DE/P_i$. In this case we apply Vaughan's identity (Lemma~\ref{le:Vaughan-identity}) to $n_1$ and $n_2$. Then it suffices to show that, with $P_j, D_j, \alpha_d, \beta_d, h_j(x)$ as in~\eqref{eq:S2+claim}, we have, for any bounded $c_k$,
\begin{equation}
\label{eq:multilinClaim}
\begin{split}
&\sum_{0 < |k| \leq h \log X} c_k \sum_{\substack{u_1, v_1 \\ u_2, v_2}} a_1(u_1) b_1(v_1) a_2(u_2) b_2(v_2) h_1\left(\frac{u_1 v_1}{P_1}\right) h_2\left(\frac{u_2 v_2}{P_2}\right) \sum_{\substack{d_1 \sim D_1 \\ d_2 \sim D_2 \\ (d_1 u_1 v_1, d_2 u_2 v_2) \mid k}} \alpha_{d_1} \beta_{d_2}  \\
& \Biggl(\sum_{\substack{m_1, m_2 \\ d_1 m_1 u_1 v_1 = d_2 m_2 u_2 v_2 + k}} g\left(\frac{d_1 u_1 v_1 m_1}{X}\right)  - \frac{\widehat{g}(0) X}{[d_1 u_1 v_1, d_2 u_2 v_2]}\Biggr) \ll X^{1-\varepsilon/4}
\end{split}
\end{equation}
whenever $a_j(u_j), b_j(v_j)$ for $j = 1, 2$, are bounded and such that
\begin{itemize}
\item $a_j(u_j)$ are supported on $(U_j, 2U_j] \subseteq (1/2, 2P_j^{1/2}]$ and $b_j(v_j)$ are supported on $(V_j, 2V_j] \subseteq (P_j^{1/2}/2, 4P_1]$. Moreover $U_j V_j \in (P_j/4, 2P_j]$ 
\item For each $j \in \{1, 2\}$ with $V_j \geq 4X^{2/3}$ one has $b_j(n) = \sigma(n/V_j)$ where $\sigma(x)$ is as in~\eqref{eq:sigmadef}. 
\end{itemize}
Notice that by~\eqref{eq:DjPjbound}
\begin{equation}
\label{eq:DjMjNjbound}
D_j U_j V_j \ll D_j P_j \ll DE \ll X^{14/25}.
\end{equation} 
We split further into three cases according to the sizes of $V_j$.

\textbf{Case 3.1 ($P_1, P_2 > X^{21/50}$ but $V_1 \leq X^{21/50}$):} In this case we shall apply Lemma~\ref{le:TypeIISpec} with 
\[
N \asymp \min\{D_1 U_1, V_1\}, \quad M \asymp \max\{D_1 U_1, V_1\}, \quad and \quad Q \asymp D_2 U_2 V_2.
\]
We need to check that these choices satisfy~\eqref{eq:typeIIcondSpec}. By~\eqref{eq:DjMjNjbound} we have $\max\{MN, Q\} \ll X^{14/25}$, and by assumption $V_1 \leq X^{21/50}$. Hence it suffices to show that $D_1 U_1 \ll X^{21/50}$. But since $P_1 > X^{21/50}$ and $V_1 \geq P_1^{1/2}/2$, we have by~\eqref{eq:DjMjNjbound} 
\[
D_1 \cdot U_1 \leq DE/V_1 \leq 2DE/P_1^{1/2} \ll X^{21/50}. 
\]
Hence~\eqref{eq:multilinClaim} follows from Lemma~\ref{le:TypeIISpec}; for instance
\[
\gamma_q = \frac{1}{X^{\varepsilon/100}} \sum_{\substack{q = d_2 u_2 v_2 \\ d_2 \sim D_2}} a_2(u_2) b_2(v_2) h_2\left(\frac{u_2 v_2}{P_2}\right) \beta_{d_2}.
\]

\textbf{Case 3.2 ($P_1, P_2 > X^{21/50}$ but $V_2 \leq X^{21/50}$):} Like Case 2 followed similarly to Case 1, this case follows similarly to Case 3.1, using a variant of~\eqref{eq:swap} and applying Lemma~\ref{le:TypeIISpec} with 
\[
N = \min\{D_2 U_2, V_2\}, \quad M = \max\{D_2 U_2, V_2\}, \quad \text{and} \quad Q = D_1 U_1 V_1.
\]

\textbf{Case 3.3 ($V_1, V_2 > X^{21/50}$):} In this case we have $b_j(v_j) = \sigma(v_j/V_j)$ for $j =1, 2$ and we shall apply Lemma~\ref{le:TypeISpec}. Before we can do this we need to separate the variables $u_j$ and $v_j$. Using the inverse Fourier transform we write
\[
h_j\left(\frac{u_j v_j}{P_j}\right) = \int_{-\infty}^\infty \widehat{h_j}(\xi_j) e\left(\frac{u_j v_j}{P_j} \xi_j\right) d\xi_j = \frac{1}{v_j} \int_{-\infty}^\infty \widehat{h_j}\left(\frac{\xi_j}{v_j}\right) e\left(\frac{u_j}{P_j} \xi_j\right) d\xi_j.
\]
Writing, for $j = 1, 2$, $a_{j, \xi_j}(u_j) := a_j(u_j) e(\frac{u_j}{P_j} \xi_j)$ and 
\[
g_{j, \xi}(x) := \frac{\sigma(x)}{x} \widehat{h_j}\left(\frac{\xi}{x V_j}\right) \left(1+\frac{|\xi|}{V_j}\right)^2,
\]
the claim~\eqref{eq:multilinClaim} reduces to the claim
\[
\begin{split}
&\int \int \sum_{0 < |k| \leq h \log X} c_k \sum_{\substack{u_1, v_1 \\ u_2, v_2}} a_{1, \xi_1}(u_1) a_{2, \xi_2}(u_2) g_{1, \xi_1}\left(\frac{v_1}{V_1}\right) g_{2, \xi_2}\left(\frac{v_2}{V_2}\right) \sum_{\substack{d_1 \sim D_1 \\ d_2 \sim D_2 \\ (d_1 u_1 v_1, d_2 u_2 v_2) \mid k}} \alpha_{d_1} \beta_{d_2} \\ 
&\Biggl(\sum_{\substack{m_1, m_2 \\ d_1 m_1 u_1 v_1 = d_2 m_2 u_2 v_2 + k}} g\left(\frac{d_1 u_1 v_1 m_1}{X}\right)  -  \frac{\widehat{g}(0) X}{[d_1 u_1 v_1, d_2 u_2 v_2]}\Biggr) \frac{d\xi_1 d\xi_2}{V_1V_2\left(1+\frac{|\xi_1|}{V_1}\right)^{2} \left(1+\frac{|\xi_2|}{V_2}\right)^{2}}   \ll X^{1-\varepsilon/3}
\end{split}
\]
with parameters as in~\eqref{eq:multilinClaim}. This follows once we have shown that, for any $\xi_1, \xi_2$, one has
\[
\begin{split}
&\sum_{0 < |k| \leq h \log X} c_k \sum_{\substack{u_1, v_1 \\ u_2, v_2}} a_{1, \xi_1}(u_1) a_{2, \xi_2}(u_2) g_{1, \xi_1}\left(\frac{v_1}{V_1}\right) g_{2, \xi_2}\left(\frac{v_2}{V_2}\right) \sum_{\substack{d_1 \sim D_1 \\ d_2 \sim D_2 \\ (d_1 u_1 v_1, d_2 u_2 v_2) \mid k}} \alpha_{d_1} \beta_{d_2} \\
& \quad \quad \Biggl(\sum_{\substack{m_1, m_2 \\ d_1 m_1 u_1 v_1 = d_2 m_2 u_2 v_2 + k}} g\left(\frac{d_1 u_1 v_1 m_1}{X}\right)  -  \frac{\widehat{g}(0) X}{[d_1 u_1 v_1, d_2 u_2 v_2]}\Biggr)  \ll X^{1-\varepsilon/3}.
\end{split}
\]
Using~\eqref{eq:sigmaderbound} and noting that derivatives of $\widehat{h_j}$ satisfy a variant of~\eqref{eq:gFourbound} thanks to~\eqref{eq:hjderbound}, one can show that~\eqref{eq:gderivbound} holds for $g = g_{j, \xi_j}$ for $j = 1, 2$.

Now we shall apply Lemma~\ref{le:TypeISpec} with
\[
M = D_1U_1, \quad N= V_1, \quad Q = V_2, \quad R = D_2U_2.
\]
We need to check that these choices satisfy~\eqref{eq:TypeISpecCond}.

By~\eqref{eq:DjMjNjbound} we have $\max\{MN, QR\} \ll X^{14/25} \ll X^{31/50}$ and thus it suffices to check that $D_j U_j \leq X^{6/25}$. But~\eqref{eq:DjMjNjbound} also implies that 
\[
D_j U_j \ll DE/V_j \ll X^{14/25-21/50} = X^{7/50}, 
\]
and hence the claim follows from Lemma~\ref{le:TypeISpec}.

Hence we have established~\eqref{eq:conv+Claim}. Let us now turn to the claim~\eqref{eq:conv-Claim}. Recall the definition of $\alpha_d^-$ from~\eqref{eq:alpk-def}. Using the well-factorability of the linear sieve weights (see~\cite[Section 12.7]{Opera}) we can find $k = O(1)$ and bounded coefficients $a_i^\pm(u)$ supported in $[1, X^{21/50}],$ and $b_i^\pm(v)$ supported on $[1, D/X^{21/50-\varepsilon}] = [1, X^{22/153 + \varepsilon}]$ such that, for every $d$,
\[
\lambda_d^\pm = \sum_{i=1}^k \sum_{d = uv} a^\pm_i(u) b^\pm_i(v)
\]
Using this and dyadic splitting, we see that~\eqref{eq:conv-Claim} follows once we have shown that, for any bounded coefficients $a(u), b(v)$ and any $U \leq X^{21/50}, V \leq X^{22/153+\varepsilon}$ and $D' \leq DE$ and $E' \leq E$, we have
\[
\begin{split}
&\sum_{0 < |k| \leq h \log X} \Biggl| \sum_{\substack{u \sim U \\ v \sim V \\ e \sim E'}} a(u) b(v) \rho^\pm_{e} \sum_{\substack{d_2 \sim D' \\ (u v e, d_2) \mid k}} \alpha_{d_2}^- \\ & \qquad \qquad \Biggl(\sum_{\substack{m_1 \\ uvem_1 \equiv k \pmod{d_2}}} g\left(\frac{u v e m_1}{X}\right)  - \widehat{g}(0) \frac{X}{[u v e, d_2]}\Biggr) \Biggr| \ll X^{1-\varepsilon/4}.
\end{split}
\]
But this follows from Lemma~\ref{le:TypeIISpec} with
\[
N = \min\{VE', U\}, \quad M = \max\{VE', U\}, \quad \text{and} \quad Q = D'.
\]

\subsection{Showing that $S_3^\pm \ll hX$}
Finally we need to show that, for $Y \in \{2X, X^{10}\}$, we have
\begin{equation}
\label{eq:S3-claim}
\sum_{n \leq Y} \left(\sum_{d \mid n} \alpha_d^-\right)^2 \ll \frac{Y}{\log X}
\end{equation}
and
\begin{equation}
\label{eq:S3+claim}
\sum_{n \leq Y} \left(\sum_{a \in \mathcal{I}} \sum_{p \mid n} \sigma\left(\frac{p}{\sqrt{2}^a} \right) \left(1-\frac{\log p}{\log y}\right) \sum_{\substack{d \mid P(z) \\ pd \mid n}} \alpha_{d, a}^+\right)^2 \ll \frac{Y}{\log X}.
\end{equation}
Here
\[
\sum_{\substack{d \mid P(z) \\ pd \mid n}} \alpha_{d, a}^+ = \sum_{\substack{d \mid P(w, z) \\ pd \mid n}} \lambda_{d,a}^+ \sum_{e \mid (n, P(w))} \rho_e^+ \ll \sum_{e \mid (n, P(w))} \rho_e^+.
\]
Using this and recalling~\eqref{eq:sumSigmaoverI} we see that the left hand side of~\eqref{eq:S3+claim} is
\[
\ll \sum_{n \leq Y} \left(\sum_{\substack{z/4 \leq p \leq 2y \\ p \mid n}} 1\right)^2 \left(\sum_{e \mid (n, P(w))} \rho_e^+\right)^2 \ll  \sum_{n \leq Y} \left(\sum_{e \mid (n, P(w))} \rho_e^+\right)^2.
\]
Hence~\eqref{eq:S3+claim} reduces to showing
\begin{equation}
\label{eq:S3+claim2}
\sum_{n \leq Y} \left(\sum_{e \mid (n, P(w))} \rho_e^+ \right)^2 \ll \frac{Y}{\log X}
\end{equation}
for $Y \in \{2X, X^{10}\}$. Similarly 
\[
\left|\sum_{d \mid n} \alpha_{d}^-\right| \ll \left|\sum_{e \mid (n, P(w))} \rho_e^+\right| + \left|\sum_{e \mid (n, P(w))} \rho_e^-\right|
\]
and thus~\eqref{eq:S3-claim} follows once we have shown that
\[
\sum_{n \leq Y} \left(\sum_{e \mid (n, P(w))} \rho_e^\pm \right)^2 \ll \frac{Y}{\log X}
\]
for $Y \in \{2X, X^{10}\}$.

Let us concentrate on showing~\eqref{eq:S3+claim2} for $Y = 2X$ as other claims follow in the same way. Recall~\eqref{eq:t(n)upperbound} and the definition of the parameter $w_r$ from~\eqref{eq:wrdef}. Using~\eqref{eq:t(n)upperbound} and applying the Cauchy-Schwarz inequality and the Shiu bound (Lemma~\ref{le:Shiu})
\[
\begin{split}
\sum_{n \leq 2X} \left(\sum_{e \mid (n, P(w))} \rho_e^+ \right)^2 &\ll \left(\sum_{r \geq 0} 2^{-r}\right) \cdot \left(\sum_{r \geq 0} 2^{-r} \sum_{n \leq 2X} \mathbf{1}_{(n, P(w_r)) = 1} d(n)^4\right) \\
&\ll \frac{X}{\log X} \sum_{r \geq 0} 2^{-r} \left( \frac{\beta}{\beta-1} \right)^{16 r} \ll \frac{X}{\log X}
\end{split}
\]
as claimed since $\beta = 30$ and $\left(\frac{30}{29}\right)^{16} < 2$.

\section*{Acknowledgments}
The author is greatful to John Friedlander and Henryk Iwaniec for discussions concerning~\cite[Chapter 6]{Opera}, to James Maynard and Maksym Radziwi{\l\l} for pointing out the possible alternative approach described in Remark~\ref{rem:Greaves}, and to Andrew Granville for pointing out Mikawa's work~\cite{Mikawa}. The author wishes to thank the referee for comments that helped to greatly improve the exposition of the paper. The author was supported by Academy of Finland grant no. 285894.
\bibliographystyle{plain}
\bibliography{AlmostPrimeBiblio}
\end{document}